\newtheorem{theorem}{Theorem}[section]
\newtheorem{lemma}[theorem]{Lemma}
\newtheorem{proposition}[theorem]{Proposition}
\newtheorem{notation}[theorem]{Notation}
\newtheorem{corollary}[theorem]{Corollary}
\theoremstyle{definition}
\newtheorem{definition}[theorem]{Definition}
\newtheorem{remark}[theorem]{Remark}
\numberwithin{equation}{section}
\def\N{{\mathbb N}}
\def\R{{\mathbb R}}
\def\C{{\mathbb C}}
\def\co{\hbox{\rm co}}
\def\ext{\hbox{\rm Ext}}
\def\llll{{\longrightarrow}}
\def\sep{{ \ \ }}
\def\sem{{\ \ \ \ }}
\def\seg{{\ \ \ \ \ \ }}
\def\com#1{{``#1''}}
\title[The Bishop-Phelps-Bollob{\'a}s property] {Characterization of   Banach spaces $Y$ satisfying that \\ the pair $ (\ell_\infty^4,Y )$ has the Bishop-Phelps-Bollob{\'a}s \\ property  for operators
}
\author[M.D. Acosta]{Mar\'{\i}a D. Acosta}
\address{Universidad de Granada, Facultad de Ciencias,
	Departamento de An\'{a}lisis Matem\'{a}tico, 18071 Granada, Spain}
\email{dacosta@ugr.es}
\author[J.L. D{\'a}vila]{Jos{\'e} L. D{\'a}vila}
\address{Departamento de Matem\'{a}tica,  Facultad de Ciencias, Universidad de Los Andes, M{\'e}rida, 5101 Venezuela}
\email{jldavila@ula.ve}
\author[M. Soleimani]{Maryam Soleimani-Mourchehkhorti}
\address{School of Mathematics, Institute for Research in Fundamental Sciences (IPM), P.O. Box: 19395-5746, Tehran, Iran}
\email{m.soleymanei@sci.ui.ac.ir}
\thanks{The  first  author was  supported  by Junta de Andaluc\'{\i}a grant  FQM--185  and also by Spanish MINECO/FEDER grant  MTM2015-65020-P. The second author was also partially supported by Junta de Andaluc\'{\i}a grant  FQM--185. The third author was supported   by a grant from IPM}
\begin{document}

   {\large

%{\bf Abstract}

\begin{abstract}
We study the Bishop-Phelps-Bollob\'as  property for operators from $\ell_\infty ^4 $ to a Banach space.   For this reason we introduce an appropiate geometric property, namely the AHSp-$\ell_\infty ^4$.   We prove  that spaces $Y$satisfying AHSp-$\ell_\infty ^4$ are precisely those spaces $Y$ such that   $(\ell_\infty^4,Y)$ has the Bishop-Phelps-Bollob\'as property.     We also provide classes of Banach spaces   satisfying this condition. For instance, finite-dimensional spaces, uniformly  convex spaces, $C_0(L)$ and $L_1 (\mu)$ satisfy  AHSp-$\ell_\infty ^4 $.
\end{abstract}

  \maketitle

        \baselineskip=.65cm

   \section{Introduction}

Bishop-Phelps theorem \cite{BP} states that every continuous linear functional on a Banach space can be approximated (in norm) by norm attaining functionals.   Bollob{\'a}s proved a   \com{quantitative version} of that result \cite{Bol}. In order to state such result,  we denote by  $B_X$, $S_X$ and $X^*$   the closed unit ball,  the unit sphere
and the topological dual of a Banach space $X$, respectively.  If $X$ and $Y$ are  both real or  both complex Banach spaces, $L(X,Y)$ denotes the space of (bounded linear) operators from $X$ to $Y$, endowed with its usual operator norm.

%{\bf Notation: $B_X, S_X, L(X,Y)$}

\vskip3mm

{\it Bishop-Phelps-Bollob{\'a}s Theorem} (see \cite[Theorem 16.1]{BoDu}, or \cite[Corollary 2.4]{CKMMR}). Let $X$ be a Banach space and $0< \varepsilon < 1$. Given $x \in B_X$  and $x^* \in S_{X^*}$ with $\vert 1- x^* (x) \vert < \frac{\varepsilon ^2 }{4}$, there are elements $y \in S_X$ and $y^* \in S_{X^*}$  such that $y^* (y)=1$, $\Vert y-x \Vert < \varepsilon$ and $\Vert y^* - x^* \Vert < \varepsilon $.

A lot of attention has been  devoted to extending Bishop-Phelps theorem  to operators and
\linebreak[4]
 interesting results have been  obtained about that topic.  For instance, we mention the remarkable results by 
\linebreak[4]
Lindenstrauss \cite{Lin}, Bourgain \cite{Bou} and Gowers \cite{Gow}. In 2008  the study of extensions of
\linebreak[4]
 Bishop-Phelps-Bollob{\'a}s theorem to operators was initiated by Acosta, Aron, Garc{\'i}a and Maestre \cite{AAGM1}.  In order to state some of these extensions it will be convenient to  recall  the following notion.

\begin{definition} \label{def-BPBp}
	(\cite[Definition 1.1]{AAGM1}).  Let $X$  and $Y$ be either real or complex Banach spaces. The pair $(X,Y )$ is said to have the Bishop-Phelps-Bollob{\'a}s property for operators (BPBp) if for every $  0 < \varepsilon  < 1 $  there exists $ 0< \eta (\varepsilon) < \varepsilon $ such that for every $T\in S_{L(X,Y)}$, if $x_0 \in S_X$ satisfies $ \Vert T (x_0) \Vert > 1 - \eta (\varepsilon)$, then 
	there exist an element $u_0 \in S_X$  and an operator $S \in S_{L(X,Y )}$ satisfying the following conditions
	$$
	\Vert S (u_0) \Vert =1, \sem \Vert u_0- x_0 \Vert < \varepsilon \seg \text{and} 
	\sem \Vert S-T \Vert < \varepsilon.
	$$
\end{definition}

Acosta, Aron, Garc{\'i}a and Maestre  showed that the pair $(X,Y )$ has the BPBp whenever $X$ and $Y$ are finite-dimensional spaces \cite[Proposition 2.4]{AAGM1}. They also proved that 
the pair $(X,Y)$ has the BPBp  in case that $Y$ has a certain isometric property
(called property $\beta $ of Lindenstrauss), for  every Banach space $X$
\linebreak[4]
 \cite[Theorem 2.2]{AAGM1}.  For instance, the spaces $c_0$ and $\ell_\infty$ have such property. In case that  the domain is $\ell_1$ they obtained a characterization of the Banach spaces $Y$ such that $(\ell_1,Y)$  has the BPBp \cite[Theorem 4.1]{AAGM1}.   There  was also  proved that for  several classical spaces $Y$,  and for any positive measure $\mu$, the pair $(L_1(\mu), Y)$ have the BPBp, for instance when $Y=L_\infty (\nu)$ or  $Y=L_1 (\nu)$  (see  \cite{CK2}, \cite{ACGM}  and \cite{CKLM}). For those results the proofs are involved  and interesting. Even so,  as far as we know, there are no  characterization of the spaces $Y$ such that the pair $(L_1([0,1]),Y)$  has the BPBp.

%\end{document}
However, the case of $X = c_0$ is quite different from the case $X=\ell_1$ and  seems to be much more difficult.  Now we will list  results about this topic where the domain is a space $C_0(L)$  ($L$ is  a locally compact Hausdorff  space).  It was shown that $(\ell_{\infty}^n, Y )$  has the BPBp for every positive integer $n$ whenever $Y$ is uniformly convex \cite[Theorem 5.2]{AAGM1}. In fact Kim proved that  $(c_0,Y)$  also has this property under the same assumption \cite[Corollary 2.6]{KiI}.  Aron, Cascales and   Kozhushkina showed that $(X, C_0(L))$ has the BPBp if $X$ is Asplund \cite[Corollary 2.6]{ACK} (see also \cite{CGK} for 
an extension of this result).  As a consequence, the pair $(c_0, C_0(L))$ has the Bishop-Phelps-Bollobás property for operators. In the real case, it is also known that the pair $(C(K), C(S))$
also satisfies the BPBp, for any compact Hausdorff spaces $K$ and $S$ \cite[Theorem 2.5]{ABCCetc}. Kim,  Lee and Lin proved that the pair  $(L_\infty(\mu), Y)$  has
BPBp  for  every positive measure $\mu$,  whenever $Y$ is uniformly convex \cite[Theorem 5]{KLL}. Kim and Lee  extended such result for the pair $(C(K),Y)$ ($K$ is compact Hausdorff spaces)  \cite[Theorem 2.2]{KL}. In the complex case, Acosta proved that the pair $(C_0(L), Y)$  has the BPBp  for every locally  compact Hausdorff space $L$ and any  $\C$-uniformly convex space $Y$  \cite[Theorem 2.4]{AcB}. In the complex case $L_1 (\mu)$ is   $\C$-uniformly convex space, so the previous result can be applied.
There are some other sufficient conditions on a Banach space $Y$  in order that the pair $(c_0,Y)$ satisfies the BPBp (see for instance \cite[Theorem 2.4]{AGKM}).

% For more results where the domain space is some space C(K) see %also [?] and [?]. 

%Since c0 is Asplund it follows that (c0,C0(L)) has the BPBp .  

% ( SUMAS $c_0$ en partida. Y el trabajo de PRIMS para  $N=3$. LUEGO HAY QUE DEFENDER %ESTE TRABAJO Y DAR UN LISTADO DE RESULTADOS: caracterizacion y ejemplos.

However until now there is no characterization of the spaces Y such that $(c_0,Y )$ has the BPBp. Indeed in the real case it is not known whether or not the  pair  $(c_0, \ell_1)$  has the BPBp. As a consequence of \cite[Theorem 2.1]{ACKLM}, in case that   the pair   $(c_0, \ell_1)$ has the  BPBp, then the pairs  $(\ell_\infty ^n, \ell_1)$  satisfy the BPBp \com{uniformly} for every $n$. For this reason  in this paper we approach this problem by using in the domain appropriate finite-dimensional spaces.

Notice that for dimension $2$,  since  $\ell_\infty ^2 $  is isometrically isomorphic to $\ell_{1}^2$, as a consequence of 
\linebreak[4]
 \cite[Theorem 4.1]{AAGM1},  it is known   that $(\ell_\infty ^2,Y)$ has the BPBp if and only if $Y$ has the approximate
 \linebreak[4]
  hyperplane series property for convex combinations of two elements. A characterization of the spaces $Y$ such that $(\ell_\infty ^3,Y)$ has the BPBp was shown in \cite[Theorem 2.9]{ABGKM2}. As a consequence of that result, classical Banach spaces satisfying the  previous property were provided. The goal of this paper is  to extend the  above mentioned  results for the pair  $(\ell_\infty ^4,Y)$.

%More precisely,   we  will introduce AHSp-$\ell_{\infty}^4 $   %and then we will show that a  Banach space $Y$ satisfies that  %$(\ell_\infty ^4,X )$ has the BPBp if and only if  $Y$ has  %AHSp-$\ell_{\infty} ^4$.

Now we briefly describe the content of the paper. In section 2 we introduce a geometric property on a Banach space $Y$, namely the AHSp-$\ell_{\infty} ^4$ (see Definition \ref{def-AHSPL4}). We also provide several reformulations of such property in Proposition \ref{pro-char}. That result is essential to prove in section 3 that the pair $(\ell_\infty ^4,Y)$ has the BPBp for operators  if and only if $Y$ has the AHSp-$\ell_{\infty} ^4$  (see Theorem \ref{teo-char}).
In section 4 we provide examples of classical  spaces satisfying the AHSp-$\ell_{\infty} ^4$.  For instance,  we  check  that  finite-dimensional spaces and uniformly convex spaces  have this property. It is also satisfied that  $C_0(L,Y)$  has the AHSp-$\ell_{\infty} ^4$ whenever $Y$ has the same property, for any locally compact Hausdorff space $L$  (Proposition \ref{pro-C0-Y}). Lastly we prove that $\ell_1$ has the AHSp-$\ell_{\infty} ^4$ (Proposition \ref{prop-ell1}). The proof of that result requires some effort.  As a consequence of the result for $\ell_1$, we obtain that 
$(\ell_\infty ^4, L_1(\mu))$ has the Bishop-Phelps-Bollob{\'a}s property for any  positive measure $\mu$.

 Throughout this paper  we follow the spirit   of the results obtained in \cite{ABGKM2} for $\ell_\infty ^3$, but the proofs are  much more  complicated. We provide a few arguments for that. 
  One reason is that for $\ell_{\infty} ^3$ any subset of three  extreme points contained in the same face of the unit ball  can be applied to any other subset of the same kind  by using an appropriate linear isometry.
%in the same face of the unit ball can be applied in any other three extreme points  %satisfying the same conditions  
For dimension $4$ the previous statement is not satisfied.
Because of  that the image under an operator of any three  extreme points in  the same face of $B_{\ell_\infty^ 3}$  is a good choice  in order to identify an operator whose domain is $\ell_\infty^3$.  In  case   that the domain of the operator is $\ell_\infty ^4$ we had to find   an appropriate choice of the basis in order to identify   such operator   and to compute its norm (Proposition  \ref{prop-ident}).  Another reason is that due to the bigger amount of extreme points in the unit ball of $\ell_\infty^4 $  the description used to identify operators whose domain is $\ell_\infty ^4$ is  more involved.  As a consequence, the property of  $Y$ equivalent to the fact that 
$(\ell_\infty ^4,Y)$ has the BPBp is more complicated. That reason also makes to provide examples more difficult.

Throughout the paper we consider {\it real } normed spaces.  By $\ell _ \infty ^4$ we denote the space $\R^4$, endowed with the norm given by $\Vert x \Vert = \max \{ \vert x_i \vert : i \le 4 \}$.

\section{The approximate hyperplane sum property for $\ell_\infty ^4$}

 In this section we identify the unit ball of $L(\ell_\infty ^4, Y)$ with a certain family of elements in $Y^4$ called $M_Y^4$ (Proposition \ref{prop-ident}). We also introduce an intrinsic  property on  a Banach space $Y$, namely the AHSp-$\ell_\infty ^4$ and show a characterization of that property  (see Proposition \ref{pro-char}).

\begin{notation}
		\label{not-M4Y}
	If $Y$ is a Banach space, we will denote by
	$$
	M^4_Y= \{ (y_i)_{i \le 4}  \in (B_Y)^4:  y_{i_1}-y_{i_2}+y_{i_3} \in B_Y, \forall   1 \le i_1 < i_2 < i_3 \le 4 \}.
	$$
	\end{notation}

\begin{remark}
It is clear that $(y_i)_{i \le 4} \in M^4_Y $  if  $(-y_i)_{i \le 4} \in M^4_Y $.
\end{remark}

The following notion is analogous to the AHSp-$\ell_\infty ^3$  that was used to characterize those spaces $Y$ such that the pair $(\ell_\infty ^3, Y)$ has the BPBp for operators  (see \cite[Definition 2.1]{ABGKM2}).

\begin{definition}
	\label{def-AHSPL4}
	A Banach space $Y$  has  the  {\it  approximate  hyperplane  sum property for       $\ell_\infty^4$} 
	\linebreak[4]
	(AHSp-$\ell_\infty^4$)  if for every $0 < \varepsilon < 1$ there is $ 0< \gamma (\varepsilon) < \varepsilon $ satisfying the following condition
	
	For every $ (y_i)_{i \le 4}  \in M_Y^4$,  if there exist a nonempty subset  $A$ of $\{1,2,3,4\}$ and $y^* \in
	S_{Y^*}  $ such that $y^* (y_i) > 1 - \gamma (\varepsilon) $ for  each $i \in A$, then there  exists an element $ (z_i)_{i \le 4}  \in M_Y^4$  satisfying $\Vert z_i - y_i \Vert < \varepsilon $ for
	every $i \le 4$ and $\Vert \sum_{i \in A } z_i \Vert = \vert A\vert $.
\end{definition}

\vskip3mm

\begin{remark}
\label{re-4-3}
We recall that  $Y$ has the AHSp-$\ell_\infty ^3$
 if for every $\varepsilon > 0$ there is $\gamma
 > 0$ satisfying the following condition:
 
 For a subset $\{y_i: i\le 3\} \subset  B_Y $ with $\Vert y_1 +
 y_2 - y _ 3 \Vert \le 1$ , if there exist a nonempty subset  $A$ of $\{1,2,3\}$ and $y^* \in
 S_{Y^*}  $ such that $y^* (y_i) > 1 - \gamma $ for  every $i \in A$, then there  exists
 $\{z_i: i \le 3\} \subset B_Y$  with $\Vert z_1 +
 z_2 - z_3 \Vert \le 1$ satisfying $\Vert z_i - y_i \Vert < \varepsilon $ for
 every $i \le 3$ and $\Vert \sum_{i \in A } z_i \Vert = \vert A \vert $.
 
 \vskip3mm
 
 If $(y_i)_{i \le 3}$ satisfies the assumption in the definition of the AHSp-$\ell_\infty ^3$, it is immediate that 
 \linebreak[4]
 $(y_1, y_3, y_2, y_2) \in M_{Y}^4$. This is the key idea to check that AHSp-$\ell_\infty ^4$ implies AHSp-$\ell_\infty ^3$. 
\end{remark}

 Now we state some basic but useful results.

%Now  we will state some basic results that %will be useful  later in order  to provide %examples of spaces with the %AHSp-$\ell_\infty^4$.

\begin{lemma}
	\label{le-t-sets}
	If $\{ y_i: 1 \le i \le 4\} \subset B_Y$, the following conditions are equivalent
	\begin{enumerate}
		\item $( y_1, y_2, y_3, y_4 ) \in M_Y^4 $
		\item $(  y_2, y_3, y_4, -y_1 ) \in M_Y^4 $
		\item $( y_3, y_4, -y_1, -y_2 ) \in M_Y^4 $
		\item $( y_4,-y_1, -y_2, -y_3)  \in M_Y^4 $
	\end{enumerate}
\end{lemma}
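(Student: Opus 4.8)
The plan is to prove the single implication $(1) \Rightarrow (2)$ and then obtain the whole chain by iteration, using the symmetry of $M_Y^4$ recorded in the preceding Remark. To this end, consider the map $\sigma \colon (B_Y)^4 \to (B_Y)^4$ defined by $\sigma(y_1, y_2, y_3, y_4) = (y_2, y_3, y_4, -y_1)$, that is, a cyclic shift of the four entries together with a sign change of the entry moved to the last position. Conditions (1)--(4) are precisely the assertions that $(y_1,y_2,y_3,y_4)$, $\sigma(y_1,y_2,y_3,y_4)$, $\sigma^2(y_1,y_2,y_3,y_4)$ and $\sigma^3(y_1,y_2,y_3,y_4)$ belong to $M_Y^4$. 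Hence it suffices to show that $\sigma$ maps $M_Y^4$ into itself; this yields $(1)\Rightarrow(2)\Rightarrow(3)\Rightarrow(4)$ at once.

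First I would check that $\sigma$ preserves $M_Y^4$ by a direct inspection of the four defining conditions. Writing $(z_1, z_2, z_3, z_4) = \sigma(y_1, y_2, y_3, y_4)$, the membership $z_i \in B_Y$ is clear because $B_Y$ is symmetric. For the alternating triples one computes, for instance, $z_1 - z_2 + z_3 = y_2 - y_3 + y_4$ and $z_1 - z_2 + z_4 = -(y_1 - y_2 + y_3)$, and similarly for the remaining two triples $(1,3,4)$ and $(2,3,4)$. The point is that each of the four alternating sums associated with $(z_i)$ equals, up to sign, one of the four alternating sums associated with $(y_i)$; since $(y_i) \in M_Y^4$ guarantees that all of the latter lie in $B_Y$, and $B_Y$ is symmetric, all the former lie in $B_Y$ as well. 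This establishes $\sigma(M_Y^4) \subseteq M_Y^4$.

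Finally, to close the cycle back to (1), I would use that $\sigma^4 = -\mathrm{id}$, i.e. $\sigma^4(y_1, y_2, y_3, y_4) = (-y_1, -y_2, -y_3, -y_4)$. Assuming (4), namely $\sigma^3(y_1,y_2,y_3,y_4) \in M_Y^4$, one more application of $\sigma$ gives $(-y_1, -y_2, -y_3, -y_4) \in M_Y^4$, and the Remark preceding the lemma (which states that $(y_i)_{i\le 4} \in M_Y^4$ if and only if $(-y_i)_{i\le 4} \in M_Y^4$) then returns $(y_1, y_2, y_3, y_4) \in M_Y^4$, that is, (1). Thus all four conditions are equivalent.

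The only delicate point is the sign bookkeeping in verifying $\sigma(M_Y^4) \subseteq M_Y^4$: for each triple $1 \le i_1 < i_2 < i_3 \le 4$ one must correctly identify which original alternating sum the image triple reproduces and with which sign. This is a finite and entirely routine check, so I do not expect any essential obstacle.
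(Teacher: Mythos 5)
Your proposal is correct and rests on essentially the same argument as the paper: the key computation in both is that the alternating sums of $(y_2,y_3,y_4,-y_1)$ coincide, up to sign, with those of $(y_1,y_2,y_3,y_4)$, after which the symmetry of $B_Y$ does the rest. The only organizational difference is that the paper reads this computation as giving the two-sided equivalence $(1)\Leftrightarrow(2)$ outright and then iterates, whereas you extract only the forward implication $\sigma(M_Y^4)\subseteq M_Y^4$ and close the cycle of implications via $\sigma^4=-\mathrm{id}$ together with the Remark; both are valid.
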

\begin{proof}
	Statement (1) is satisfied exactly when the following elements belong to  $B_Y$
	\begin{equation}
		\label{cond-1}
		y_1-y_2+y_3, \sep  y_1-y_2+y_4, \sep  y_1-y_3+y_4, \sep  y_2-y_3+y_4.
	\end{equation}
	On the other hand, condition  (2) means that  each of the following elements belongs to $B_Y$
	\begin{equation}
		\label{cond-2}
		y_2-y_3+y_4, \sep  y_2-y_3-y_1, \sep  y_2-y_4-y_1, \sep  y_3-y_4-y_1.
	\end{equation}
	As a consequence  conditions (1) and (2)  are equivalent. By applying this fact we obtain that (2)
	and  (3) are equivalent. Again by the same argument (3) and (4) are equivalent.
\end{proof}

Next result shows that the condition stated in  Definition \ref{def-AHSPL4} is trivially satisfied in case that the set $A$  contains a unique element.

\begin{proposition}
	\label{pro-AHSP-1-element}
	Let $Y$ be a Banach space. Let   $ 0 < \varepsilon < 1$ and $ (y_i)_{i \le 4} \in M_Y^4$.
	Assume that 
	\linebreak[4]
	$A\subset \{1,2,3,4\}$  contains only one  element  and it is satisfied that
	$$
	\Vert y_j\Vert  > 1 - \frac{\varepsilon}{6}, \sem j \in A.
	$$
	Then there is an element $(z_i)_{i \le 4} \in M_Y^4$  such that
	$$
	\Vert z_j \Vert =1 \sep \text{for} \sep j \in A \sem \text{and} \sem  \Vert z_i - y_i \Vert < \varepsilon, \sep \forall 1 \le i \le 4 .
	$$
\end{proposition}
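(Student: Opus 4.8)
The plan is to reduce the statement, via the symmetries of Lemma \ref{le-t-sets}, to the single case $A=\{4\}$, and then to produce $(z_i)_{i\le 4}$ explicitly by normalizing $y_4$ and compensating with a suitable shrink-and-shift of the remaining three vectors in the direction of $y_4$.

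First I would carry out the reduction. Write $A=\{j\}$ and set $\delta:=1-\|y_j\|$, so that $0\le\delta<\varepsilon/6<1/6$ by hypothesis; if $\delta=0$ one simply takes $z_i=y_i$, so assume $\delta>0$. By Lemma \ref{le-t-sets} the four tuples listed there belong to $M_Y^4$ simultaneously, and for a suitable one of them the vector $\pm y_j$ occupies the fourth coordinate (the tuples (1),(2),(3),(4) place $y_4,-y_1,-y_2,-y_3$ there, respectively). Passing between these tuples only permutes coordinates and changes some signs, hence preserves every norm $\|z_i\|$, every distance $\|z_i-y_i\|$, and membership in $M_Y^4$; so a solution for the relocated tuple transfers back, and it suffices to treat $A=\{4\}$, where $\|y_4\|=1-\delta>0$.

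Next comes the construction. I would put
\[
w:=\frac{y_4}{\|y_4\|}\in S_Y,\qquad \lambda:=\frac{1}{1+2\delta},\qquad t_1:=2\lambda\delta,\ \ t_2:=\lambda\delta,\ \ t_3:=0,
\]
and define $z_4:=w$ and $z_i:=\lambda y_i-t_iw$ for $i\le 3$. Then $\|z_4\|=1$ and $\|z_4-y_4\|=\delta<\varepsilon$ hold by construction, so the norming condition at $j=4$ is free. The routine bounds $\|z_i\|\le\lambda+t_i\le\lambda(1+2\delta)=1$ and $\|z_1-z_2+z_3\|\le\lambda(1+\delta)<1$ are immediate, as is $\|z_i-y_i\|\le(1-\lambda)+t_i\le 4\delta<\varepsilon$, where the hypothesis $\delta<\varepsilon/6$ is used with room to spare.

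The heart of the argument — and the step I expect to be the main obstacle — is the three conditions defining $M_Y^4$ that involve $z_4$. Using $w=y_4/(1-\delta)$ and regrouping, for each pair $(i_1,i_2)\in\{(1,2),(1,3),(2,3)\}$ one obtains
\[
z_{i_1}-z_{i_2}+z_4=\lambda\bigl(y_{i_1}-y_{i_2}+y_4\bigr)+c\,y_4,\qquad c=\frac{1-t_{i_1}+t_{i_2}}{1-\delta}-\lambda ,
\]
whence, since $y_{i_1}-y_{i_2}+y_4\in B_Y$,
\[
\|z_{i_1}-z_{i_2}+z_4\|\le \lambda+|c|(1-\delta)=\lambda+\bigl|(1-\lambda)+\lambda\delta-(t_{i_1}-t_{i_2})\bigr| .
\]
The point is that the naive idea of merely normalizing $y_4$ fails precisely here: index $4$ occurs with a plus sign in all three of these combinations, and no single shift can neutralize the enlargement of $y_4$ in all of them at once — the pairs $(1,2)$ and $(2,3)$ demand $t_{i_1}-t_{i_2}=\lambda\delta$, whereas these force $t_1-t_3=2\lambda\delta$, which overshoots for $(1,3)$. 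The asymmetric staircase $t_1=2\lambda\delta,\ t_2=\lambda\delta,\ t_3=0$ makes the first two differences equal $\lambda\delta$, while the global contraction factor $\lambda<1$ furnishes the slack $1-\lambda$ that absorbs the overshoot. A direct substitution (using the cancellation $(1-\lambda)+\lambda\delta-\lambda\delta=1-\lambda$ for $(1,2),(2,3)$, and $(1-\lambda)+\lambda\delta-2\lambda\delta=(1-\lambda)-\lambda\delta\ge 0$ for $(1,3)$) shows the displayed bound equals $1$ for $(1,2)$ and $(2,3)$ and is strictly below $1$ for $(1,3)$. This verifies $(z_i)_{i\le 4}\in M_Y^4$ and completes the proof; I would spell out only these two short identities explicitly, as they are what make the scheme close.
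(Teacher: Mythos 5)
Your proof is correct and takes essentially the same approach as the paper's: reduce via Lemma \ref{le-t-sets} to a single distinguished index, normalize that vector, and replace the other three vectors by a contraction plus staircase shifts along the normalized vector, chosen so that the shift differences cancel in the $M_Y^4$ constraints containing the distinguished index while the contraction slack absorbs the remaining enlargement. The only differences are cosmetic: you place the distinguished vector in position $4$ rather than $1$, and you choose the contraction $\lambda=1/(1+2\delta)$ and shifts $t_i$ adaptively in terms of $\delta=1-\Vert y_4\Vert$ (getting two bounds exactly equal to $1$), whereas the paper uses the fixed $\varepsilon$-based constants $a=\varepsilon/3$, $a_2=\varepsilon/3$, $a_3=\varepsilon/6$, $a_4=-\varepsilon/6$ and gets strict inequalities throughout.
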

\begin{proof}
	Let  $ 0 < \varepsilon < 1$ and $(y_i )_{i \le 4} \in M_Y^4$.
	In view of Lemma \ref{le-t-sets}  it suffices to show the statement  in case that $A=\{1\}$.
	
	Assume that the element $y_1$ satisfies that
	$$
	\Vert y_1\Vert  > 1 - \frac{\varepsilon}{6} > 0.
	$$
	We define the following real numbers
	$$
	a=\frac{\varepsilon}{3}, \sem  a_2= \frac{\varepsilon}{3}, \sem   a_3= \frac{\varepsilon}{6},  \sem  a_4=-\frac{\varepsilon}{6},
	$$
	and the elements in $Y$
	$$
	z_1=\frac{y_1}{\Vert y_1\Vert }  \sem \text{and}\sem z_i=(1-a)y_i + a_iz_1, \sep  i\in \{2,3,4\}.
	$$
	It is  trivially satisfied that $z_1 \in S_Y$. We also have that
	\begin{equation}
	\label{z1-y1}
	\Vert  z_1 - y_1\Vert = 1- \Vert y_1\Vert  < \frac{\varepsilon}{6} < \varepsilon.
	\end{equation}
	In case that $2 \le  i\le 4$  we obtain that
	$$
	\Vert z_i\Vert  \leq (1-a) + |a_i| \leq 1- \frac{\varepsilon}{3} + \frac{\varepsilon}{3} = 1
	$$
	and
	$$
	\Vert z_i-y_i\Vert =\Vert -ay_i + a_iz_1\Vert  \leq a +|a_i|\leq  \frac{2 }{3} \varepsilon  < \varepsilon.
	$$
	It remains to show that for every $1 \le i_1 < i_2 < i_3 \le 4$ it is satisfied that
	$$
	\Vert z_{i_1}-  z_{i_2}+ z_{i_3} \Vert \le 1.
	$$
	In case that $\{i_1, i_2, i_3\}=\{2,3,4\}$ we obtain
	$$
	\Vert z_2 - z_3 + z_4\Vert  = \Vert  (1-a)(y_2-y_3+y_4) + (a_2-a_3+a_4)z_1 \Vert  =  \Vert  (1-a)(y_2-y_3+y_4)  \Vert \leq 1-a < 1.
	$$
	Otherwise $i_1=1$. Notice that for every $1 < i_2 < i_3 \le 4$ we have that
	$$
	\vert a - a_{i_2}+a_{i_3} \vert \le \frac{\varepsilon}{6}.
	$$
	Hence
	\begin{align*}
	\Vert  z_1-z_{i_2}+z_{i_3}\Vert  &=  \Vert  (1-a)(z_1-y_{i_2}+y_{i_3}) +(a - a_{i_2}+a_{i_3})z_1\Vert  \\
	& \leq
	(1-a)(1+ \Vert z_1-y_1\Vert )+ |a - a_{i_2}+a_{i_3}| \\
	&  < \Bigl( 1 - \frac{ \varepsilon}{3}\Bigr)  \Bigl(1 +\frac{\varepsilon}{6} \Bigr) + \frac{\varepsilon}{6} \seg \sem \text{(by \eqref{z1-y1})}\\
	& =   1 -   \frac{  \varepsilon}{3} +  \frac{  \varepsilon}{6}  -   \frac{  \varepsilon^2 }{18} +  \frac{  \varepsilon}{6} \\
	&  = 1 -    \frac{  \varepsilon^2 }{18}\\
	&
	< 1.
	\end{align*}
	We checked that $(z_i)_{i \le 4} \in M_Y^4$ and  it satisfies all the required conditions.
\end{proof}

%It is  clear that $ z \in A \subset Y$ is a $\mathcal{T}$-set if %$-A$ is a
%$\mathcal{T}$-set. Since the notion of $\mathcal{T}$-set depends %of the order of
%the elements sometimes we will denote by $(y_1, y_2, y_3, y_4)$  %the
%$\mathcal{T}$-set $\{y_i: 1 \le i \le 4 \}$.

\begin{notation}
	\label{not-E1-vi}
In what follows we will denote by $E_1$ the subset of $B_{\ell^4_\infty}$ given by
$$
E_1 = \{ x \in \ell_\infty ^4: 1= x(1)= \Vert x \Vert \}.
$$
In the sequel it will be convenient to use the following notation for the following  elements  in $E_1$
$$
v_1 =(1,1,1,1), \sep  v_2 =(1,-1,1,1), \sep v_3 =(1,-1,-1,1), \sep v_4
=(1,-1,-1,-1),
$$
$$
v_5 =(1,1,-1,1), \sep  v_6=(1,1,-1,-1), \sep v_7=(1,1,1,-1), \sep v_8 =(1,-1,1,-1),
$$
By  $\mathcal{B}$ we will denote the set given by $\mathcal{B}=\{ v_i: 1 \le i \le
4\}$.
\end{notation}

The proofs  of next assertions  are straightforward. For the first one it suffices  to check that   every coordinate of an  extreme point of $E_1$   belongs to $\{1,-1\}$.  %In fact we have the following elementary result. 

\begin{lemma}
	\label{le-ext-E1}
	It is satisfied that
	$$
	\ext \bigl(E_1 \bigr)= \{ v_i : i \le 8 \}.
	$$
\end{lemma}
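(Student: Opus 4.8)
The plan is to recognise $E_1$ as (an affine copy of) the cube $[-1,1]^3$ and to show its extreme points are exactly its vertices. First I would note that, since $x(1)=1$ already forces $\Vert x \Vert \ge 1$, membership $x=(1,x_2,x_3,x_4) \in E_1$ is equivalent to $\vert x_j \vert \le 1$ for $j \in \{2,3,4\}$; the first coordinate is frozen and so plays no role in the extremal structure. Thus the relevant variation takes place only in the three free coordinates, each constrained to $[-1,1]$.

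For the inclusion $\ext(E_1) \subseteq \{v_i : i \le 8\}$ I would argue by contraposition, as suggested in the text: if some $x = (1,x_2,x_3,x_4) \in E_1$ has $\vert x_j \vert < 1$ for an index $j \in \{2,3,4\}$, pick $\delta > 0$ small enough that $\vert x_j \pm \delta \vert \le 1$ and set $x^{\pm} = x \pm \delta e_j$, where $e_j$ denotes the $j$-th canonical vector of $\R^4$. Then $x^+, x^- \in E_1$ are distinct and $x = \frac{1}{2}(x^+ + x^-)$, so $x$ is not extreme. Hence every extreme point has all three free coordinates in $\{-1,1\}$, i.e. it is one of the $2^3 = 8$ sign vectors $(1, \pm 1, \pm 1, \pm 1)$.

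For the reverse inclusion I would check directly that each sign vector $v = (1, \epsilon_2, \epsilon_3, \epsilon_4)$ with $\epsilon_i \in \{-1,1\}$ is extreme: if $v = \frac{1}{2}(a+b)$ with $a,b \in E_1$, then $a_i + b_i = 2 \epsilon_i$ for $i \in \{2,3,4\}$, and since $\vert a_i \vert, \vert b_i \vert \le 1$ the equality $\vert a_i + b_i \vert = 2$ forces $a_i = b_i = \epsilon_i$; together with $a_1 = b_1 = 1$ this gives $a = b = v$. It then only remains to match these $8$ sign patterns against the explicit list in Notation \ref{not-E1-vi}, confirming that $v_1, \dots, v_8$ do exhaust the vertices $(1, \pm 1, \pm 1, \pm 1)$.

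There is no serious obstacle here: the statement is the familiar fact that the extreme points of a cube are its vertices. The only points requiring a little care are observing that the pinned first coordinate contributes no freedom (so the perturbation in the forward direction must be carried out in one of the free coordinates) and verifying that the enumeration $v_1, \dots, v_8$ really covers all $2^3$ choices of signs.
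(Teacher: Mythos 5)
Your proof is correct and takes essentially the same route as the paper, whose (sketched) proof is precisely the observation that every coordinate of an extreme point of $E_1$ must lie in $\{1,-1\}$ --- that is, your perturbation argument in the free coordinates, completed by the routine check that each of the eight sign vectors is indeed extreme and matches the list $v_1,\dots,v_8$. Nothing is missing.
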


\begin{lemma}
	\label{le-basis}
	The set $\mathcal{B}$ is a basis of $\R^4   $ contained in $S_{ \ell_{\infty}^4 }$.
	Moreover the functionals given by
	$$
	v_{1}^* (x)= \frac{ x(1) +  x(2)}{2},  \sep v_{i}^* (x)= \frac{ x(i+1) -  x(i)}{2},
	\  i=2,3 \  \text{and} \sep v_{4}^* (x)= \frac{ x(1) -  x(4)}{2},
	$$
	are elements in $S_{(\ell_{\infty}^4 )^*}$ that are the biorthogonal functionals of
	the basis $\mathcal{B}$.
	Hence each   $x$ in $\R ^4$ can be expressed as
	$$
	x= \frac{ x(1) +  x(2)}{2} v_1 + \sum _{i=2}^3 \frac{x(i+1)- x(i)}{2} v_i +
	\frac{x(1)- x(4)}{2} v_4.
	$$
\end{lemma}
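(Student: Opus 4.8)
The plan is to verify everything by direct computation, since all four vectors and all four functionals are given explicitly. The heart of the argument is to check the biorthogonality relations $v_i^*(v_j) = \delta_{ij}$ for $1 \le i, j \le 4$; once these hold, both the basis claim and the expansion formula follow formally, and the norm assertions reduce to one-line estimates. So the first and essentially only computation of substance is to fill in the $4 \times 4$ table of values $v_i^*(v_j)$.

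First I would compute that table. Each functional $v_i^*$ depends on only two coordinates, and each $v_j$ has entries in $\{1,-1\}$, so every entry of the table is one of $0$, $\pm 1$. For instance $v_1^*(v_j) = \frac{v_j(1)+v_j(2)}{2}$ equals $1$ for $j=1$ (since $v_1=(1,1,1,1)$) and equals $0$ for $j=2,3,4$ (since each of $v_2,v_3,v_4$ has first two coordinates $(1,-1)$). The rows for $v_2^*(x)=\frac{x(3)-x(2)}{2}$, $v_3^*(x)=\frac{x(4)-x(3)}{2}$ and $v_4^*(x)=\frac{x(1)-x(4)}{2}$ are checked in exactly the same way, and one finds precisely the identity matrix. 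There is no real obstacle here: it is a finite, entirely mechanical verification, and the only thing to be careful about is keeping straight which coordinate difference defines each $v_i^*$.

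From biorthogonality, linear independence of $\mathcal{B}$ is immediate, because applying $v_i^*$ to a vanishing linear combination $\sum_j c_j v_j = 0$ isolates $c_i = 0$; hence $\mathcal{B}$ is a basis of $\R^4$. Each $v_i$ has all coordinates equal to $\pm 1$, so $\Vert v_i \Vert = 1$ and $\mathcal{B} \subset S_{\ell_\infty^4}$. For the functional norms I would argue self-containedly: each $v_i^*$ has the form $x \mapsto \frac{\pm x(a) \pm x(b)}{2}$, so $|v_i^*(x)| \le \tfrac12(|x(a)|+|x(b)|) \le \Vert x \Vert$, giving $\Vert v_i^* \Vert \le 1$; combined with the already-established equality $v_i^*(v_i) = 1 = \Vert v_i \Vert$, this forces $\Vert v_i^* \Vert = 1$, so each $v_i^* \in S_{(\ell_\infty^4)^*}$. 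Finally, since $\{v_i^*\}_{i\le 4}$ are the biorthogonal functionals of the basis $\{v_i\}_{i \le 4}$, every $x \in \R^4$ satisfies $x = \sum_{i=1}^4 v_i^*(x)\, v_i$, and substituting the explicit formulas for the $v_i^*$ yields the displayed expansion.
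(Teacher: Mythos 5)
Your proof is correct: the biorthogonality table $v_i^*(v_j)=\delta_{ij}$ checks out, and the deductions of linear independence, the norm equalities, and the expansion $x=\sum_{i=1}^4 v_i^*(x)\,v_i$ all follow exactly as you say. The paper omits the proof as straightforward, and your direct verification is precisely the intended argument, so there is nothing to add.
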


As a consequence of the last assertion in the previous result we obtain the following.

\begin{remark}
	\label{re-vi-bad-vi-good}
The following equalities are satisfied
$$
v_5 =  v_1-v_2 + v_3 , \sem v_6= v_1 - v_2 + v_4 ,
$$
and 
$$
\sep v_7 = v_1 - v_3 + v_4 , \sem v_8 = v_2 - v_3 + v_4 .
$$
Hence $(v_i)_{ i \le 4} \in M_{Y}^4 $ for $Y= \ell _\infty ^4$.
\end{remark}

The next result shows the connection between operators  from $\ell_\infty ^4 $ to $Y$  and the set $M^4_Y$. 

\begin{proposition}
	\label{prop-ident}
	Every element $T \in L(\ell_\infty ^4, Y)$  satisfies 
	$$
	\Vert T \Vert = \max \{ \bigl\Vert T (v_{i_1} - v_{i_2} + v_{i_3})\bigr\Vert:
	1 \le i_1 \le i_2\le i_3 \le 4\}.
	$$
So  the  mapping given by  $\Phi (T)= ( T(v_i))_{i \le 4} $
identifies  $ B_{L(\ell_\infty ^4, Y)}$  with $M_{Y}^4$.
%V1,  which identifies   $L(\ell_\infty ^4, Y)$ with $Y^4$,  maps   $ B_{L(\ell_\infty %^4, Y)}$  to  $M_{Y}^4$.
	%	V2 Hence the mapping given by  $\Phi (T)= ( T(v_i))_{i \le 4} $ is an identification %from  $L(\ell_\infty ^4, Y)$ to $Y^4$.    As a consequence, $ B_{L(\ell_\infty ^4, %Y)}$ is identified with $M_{Y}^4$ under $\Phi$.
\end{proposition}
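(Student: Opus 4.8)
The plan is to establish the norm formula first and then read off the identification as an immediate consequence. For the norm formula I would start from $\Vert T \Vert = \sup_{x \in B_{\ell_\infty^4}} \Vert T(x) \Vert$ and invoke the standard fact that the convex continuous function $x \mapsto \Vert T(x) \Vert$ attains its maximum over the compact convex polytope $B_{\ell_\infty^4}$ at a vertex, i.e.\ at an extreme point of the cube. These extreme points are exactly the $16$ vectors with all coordinates in $\{-1,1\}$, and since $\Vert T(x) \Vert = \Vert T(-x) \Vert$, it suffices to take the maximum over the $8$ of them whose first coordinate equals $1$. By Lemma \ref{le-ext-E1} those are precisely $v_1, \dots, v_8$, so at this stage $\Vert T \Vert = \max\{ \Vert T(v_i) \Vert : 1 \le i \le 8 \}$.

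The key combinatorial step is then to check that the family $\{ v_{i_1} - v_{i_2} + v_{i_3} : 1 \le i_1 \le i_2 \le i_3 \le 4 \}$ coincides with $\{ v_1, \dots, v_8 \}$. Whenever $i_1 = i_2$ one obtains $v_{i_3}$ and whenever $i_2 = i_3$ one obtains $v_{i_1}$, so the degenerate triples (and in particular $i_1=i_2=i_3$) already produce $v_1, v_2, v_3, v_4$; the four strictly increasing triples $(1,2,3),(1,2,4),(1,3,4),(2,3,4)$ produce $v_5, v_6, v_7, v_8$ respectively, by the equalities recorded in Remark \ref{re-vi-bad-vi-good}. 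Applying $T$ and taking norms turns $\Vert T \Vert = \max\{\Vert T(v_i)\Vert : i \le 8\}$ into the asserted formula indexed by the triples. I expect this bookkeeping --- ensuring every one of the eight relevant vertices is hit and none is missed --- to be the only step requiring care; the rest is formal.

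For the identification, I would use that $\mathcal{B} = \{v_1, v_2, v_3, v_4\}$ is a basis of $\R^4$ (Lemma \ref{le-basis}), so that $\Phi(T) = ( T(v_i))_{i \le 4}$ is a linear bijection from $L(\ell_\infty^4, Y)$ onto $Y^4$: an operator is both determined by and freely prescribable on a basis. It then remains to see that $\Phi$ carries $B_{L(\ell_\infty^4,Y)}$ onto $M_Y^4$. Writing $y_i = T(v_i)$, the norm formula gives $\Vert T \Vert \le 1$ if and only if $\Vert T(v_i)\Vert \le 1$ for all $i \le 8$; using Remark \ref{re-vi-bad-vi-good}, these eight inequalities say precisely that $y_i \in B_Y$ for $i \le 4$ together with $y_1 - y_2 + y_3,\ y_1 - y_2 + y_4,\ y_1 - y_3 + y_4,\ y_2 - y_3 + y_4 \in B_Y$. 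The last four are exactly the requirements $y_{i_1} - y_{i_2} + y_{i_3} \in B_Y$ over $1 \le i_1 < i_2 < i_3 \le 4$, which is the definition of $M_Y^4$ in Notation \ref{not-M4Y}. Hence $T \in B_{L(\ell_\infty^4,Y)}$ if and only if $\Phi(T) \in M_Y^4$, and restricting the bijection $\Phi$ yields the claimed identification.
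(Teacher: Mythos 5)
Your proposal is correct and follows essentially the same route as the paper's proof: reduce the operator norm to the extreme points $v_1,\dots,v_8$ of $E_1$ (the paper via $\ext (B_{\ell_\infty ^4})= \ext (E_1) \cup \ext(-E_1)$ and Lemma \ref{le-ext-E1}, you via convexity and the $\pm$ symmetry, which is the same reduction), then use the identities of Remark \ref{re-vi-bad-vi-good} to rewrite $v_5,\dots,v_8$ as the strictly increasing combinations $v_{i_1}-v_{i_2}+v_{i_3}$, and finally read off the identification of $B_{L(\ell_\infty ^4,Y)}$ with $M_Y^4$ from the basis property of $\mathcal{B}$ in Lemma \ref{le-basis}. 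Your explicit bookkeeping of the degenerate triples ($i_1=i_2$ or $i_2=i_3$ yielding $v_1,\dots,v_4$) only spells out what the paper's non-strict indexing leaves implicit.
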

\begin{proof}
	In view of  Lemma \ref{le-basis}   the set $\mathcal{B}=\{ v_i : i \le 4\}$ is a basis of $\R^4$,  so every  operator $T$ from $ \ell_\infty ^4$ to $Y$  is determined by the element $( T(v_i))_{ i \le 4}$.
	
	It is clear $\ext \bigl( B_{\ell_\infty ^4}\bigr)  = \ext \bigl(  E_1 \bigr)  \cup \ext \bigl(-E_1\bigr)$.  Hence
\begin{eqnarray}
\label{ope-norm}
\nonumber
\Vert T \Vert&=& \max \{ \Vert T(e) \Vert : e \in \ext \bigl( B_{\ell_\infty ^4 }\bigr) \} \\
&=& 	\max \{ \Vert T(e) \Vert : e \in \ext (E_1) \} \\
\nonumber
&=& \max \{ \Vert T(v_i) \Vert : i \le 8 \},
\end{eqnarray}
	where we used Lemma \ref{le-ext-E1}.
	
	In view of Remark \ref{re-vi-bad-vi-good} we have that
	$$
	\{ v_i : 5 \le i \le 8\} = \{  v_ { i_1} -  v_ { i_2} +v_ { i_3}  :   1 \le i_1 < i_2 < i_3  \le 4 \}.
	$$
 From  \eqref{ope-norm} and the previous equality we obtain   that
$$
\Vert T  \Vert =  \max \{ \bigl\Vert T (v_{i_1} -v_{i_2} + v_{i_3})\bigr\Vert:
	1 \le i_1 \le i_2\le i_3 \le 4\}.
	$$
It  follows  that  $T \in B_{L(\ell_\infty ^4, Y)}$ if and only if $\Phi(T) $ is an element in $M_Y ^4$.
\end{proof}

We  recall that a subset   $B\subset B_{Y^{*}}$ is $1$-\textit{norming} if  $\|y\|=\sup \{\vert y^{*}(y) \vert : y^{*}\in B\}$ for each  $y\in Y$. Now we provide a characterization  of  AHSp-$\ell_\infty^4$ which will be used in the following sections.

\begin{proposition}
    \label{pro-char}
    Let $Y$ be a    Banach space. The following conditions are equivalent:
    \begin{enumerate}
        \item[1)]  $Y$  has the approximate  hyperplane  sum property for       $\ell_\infty^4$.
       %satisfies AHSp-$\ell_\infty^4$.
        \item[2)]  There is a $1$-norming subset $B \subset S_{Y^*}$ such that the condition
        stated in Definition \ref{def-AHSPL4}   is satisfied for every $y^*\in B$.
        \item[3)]
        For every $0< \varepsilon <1$ there exists  $0 < \nu ( \varepsilon)<  \varepsilon $ such that for every  element $(y_i)_{i \le 4} \in M_Y^4$  and each convex combination  $\sum_{i=1}^4 \alpha_i y_i $ satisfying
        $$
        \Bigl \Vert \sum_{i=1}^4 \alpha_i y_i\Bigr \Vert  > 1-\nu (\varepsilon ),
        $$
        there exist a  set $A\subset \{ 1,2,3,4\}$ and an element $(z_i)_{i \le 4} \in M_Y ^4$  such that
        \begin{enumerate}
            \item[i)] $\sum_{i\in A}\alpha_i>1-\varepsilon,$
            \item[ii)]  $\|z_i-y_i\|<\varepsilon  \ \ \ \text{for each } i \le 4,$
            \item [iii)] $\Vert \sum _{i \in A } z_i \Vert = \vert A \vert$.
            %             $y^\ast(z_i)=1 \text{\ for all } i\in A,$
        \end{enumerate}
    \end{enumerate}
 Moreover, if $\rho  $ is the function satisfying condition 2), then   condition 3) is also satisfied with  the function   
 $\nu = \rho ^2$. In case that 3) is satisfied with a  function $\nu $,   $Y$  has the AHSp-$\ell_\infty ^4$ with the function $\gamma(\varepsilon)=\nu(\frac{\varepsilon}{4}) $.

%Moreover, if $\gamma $ is the function %satisfying condition 1) (see Definition %\ref{def-AHSPL4}), condition 2) is also %satisfied with $\gamma $. In case that 2) is %satisfied for the function $\rho$, the %function   
%$\nu = \rho ^2$ satisfies 3). If    a %function  $\nu  $ satisfies condition 3),  %the function $\gamma $  given by  %$\gamma(\varepsilon)=\nu(\frac{\varepsilon}{4%}) $  satisfies 1). HOLDS
\end{proposition}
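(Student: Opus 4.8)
The plan is to prove the chain of implications $1)\Rightarrow 2)\Rightarrow 3)\Rightarrow 1)$, since $2)$ is an obvious weakening of $1)$ (take $B=S_{Y^*}$), and the substantive work lies in $2)\Rightarrow 3)$ and $3)\Rightarrow 1)$. First I would dispose of $1)\Rightarrow 2)$ immediately: the full dual sphere $S_{Y^*}$ is $1$-norming, so condition $2)$ holds with the same modulus.

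For $2)\Rightarrow 3)$ I would argue as follows. Suppose $\rho$ witnesses condition $2)$ for the $1$-norming set $B$, and set $\nu=\rho^2$. Let $(y_i)_{i\le 4}\in M_Y^4$ and a convex combination $\sum_{i=1}^4\alpha_i y_i$ satisfy $\bigl\Vert\sum_i\alpha_i y_i\bigr\Vert>1-\nu(\varepsilon)=1-\rho(\varepsilon)^2$. Since $B$ is $1$-norming I can choose $y^*\in B$ with $y^*\bigl(\sum_i\alpha_i y_i\bigr)>1-\rho(\varepsilon)^2$. Now I let $A=\{\,i:\ y^*(y_i)>1-\rho(\varepsilon)\,\}$. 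The key estimate, which is the standard Bishop-Phelps-Bollob\'as "averaging" trick, is to bound $\sum_{i\notin A}\alpha_i$ from above. Writing $1-\rho(\varepsilon)^2< y^*\bigl(\sum_i\alpha_i y_i\bigr)=\sum_{i\in A}\alpha_i y^*(y_i)+\sum_{i\notin A}\alpha_i y^*(y_i)$ and using $y^*(y_i)\le 1$ on $A$ together with $y^*(y_i)\le 1-\rho(\varepsilon)$ off $A$, I get $1-\rho(\varepsilon)^2<\sum_{i\in A}\alpha_i+(1-\rho(\varepsilon))\sum_{i\notin A}\alpha_i=1-\rho(\varepsilon)\sum_{i\notin A}\alpha_i$, whence $\sum_{i\notin A}\alpha_i<\rho(\varepsilon)<\varepsilon$ and therefore $\sum_{i\in A}\alpha_i>1-\varepsilon$, giving (i). I should also observe that $A$ is nonempty (otherwise the displayed inequality would force $1-\rho(\varepsilon)^2<1-\rho(\varepsilon)$, impossible for $0<\rho(\varepsilon)<1$). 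Having $y^*(y_i)>1-\rho(\varepsilon)$ for each $i\in A$ with $y^*\in B$, condition $2)$ directly produces $(z_i)_{i\le 4}\in M_Y^4$ satisfying (ii) and (iii).

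For $3)\Rightarrow 1)$ I would take $\nu$ as in $3)$ and set $\gamma(\varepsilon)=\nu(\tfrac{\varepsilon}{4})$. Let $(y_i)_{i\le 4}\in M_Y^4$, let $A\subset\{1,2,3,4\}$ be nonempty, and suppose $y^*\in S_{Y^*}$ satisfies $y^*(y_i)>1-\gamma(\varepsilon)$ for every $i\in A$. Choosing the uniform convex combination $\alpha_i=\tfrac{1}{\vert A\vert}$ for $i\in A$ and $\alpha_i=0$ otherwise, I compute $\bigl\Vert\sum_i\alpha_i y_i\bigr\Vert\ge y^*\bigl(\tfrac{1}{\vert A\vert}\sum_{i\in A}y_i\bigr)>1-\gamma(\varepsilon)=1-\nu(\tfrac{\varepsilon}{4})$. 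Condition $3)$ (applied with $\tfrac{\varepsilon}{4}$ in place of $\varepsilon$) then yields a set $A'\subset\{1,2,3,4\}$ and $(z_i)_{i\le 4}\in M_Y^4$ with $\sum_{i\in A'}\alpha_i>1-\tfrac{\varepsilon}{4}$, $\Vert z_i-y_i\Vert<\tfrac{\varepsilon}{4}$, and $\Vert\sum_{i\in A'}z_i\Vert=\vert A'\vert$.

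The main obstacle is the last step: the set $A'$ delivered by $3)$ need not equal the prescribed $A$, whereas Definition \ref{def-AHSPL4} demands $\Vert\sum_{i\in A}z_i\Vert=\vert A\vert$ for the \emph{given} $A$. I expect to resolve this by exploiting that the weights are supported on $A$, so $\sum_{i\in A'}\alpha_i>1-\tfrac{\varepsilon}{4}$ forces $A'$ to capture almost all of the mass of $A$; since each nonzero $\alpha_i$ equals $\tfrac{1}{\vert A\vert}\ge\tfrac14$, the inequality $\sum_{i\in A'}\alpha_i>1-\tfrac14$ actually forces $A\subset A'$. Thus $A'\supset A$, and $\Vert\sum_{i\in A'}z_i\Vert=\vert A'\vert$ means every $z_i$ with $i\in A'$, in particular every $i\in A$, is normed simultaneously by a common functional, so $\Vert\sum_{i\in A}z_i\Vert=\vert A\vert$ as well. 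After a small bookkeeping comparison of the $\tfrac{\varepsilon}{4}$-estimate with the required $\varepsilon$-estimate, this completes the verification that $Y$ has the AHSp-$\ell_\infty^4$ with modulus $\gamma$.
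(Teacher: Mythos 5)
Your overall route ($1)\Rightarrow 2)\Rightarrow 3)\Rightarrow 1)$, with $\nu=\rho^2$ and $\gamma(\varepsilon)=\nu(\tfrac{\varepsilon}{4})$) is the same as the paper's. In particular your treatment of $3)\Rightarrow 1)$ --- using the uniform weights $\alpha_i=\tfrac{1}{\vert A\vert}$, forcing $A\subset A'$ because each nonzero weight is at least $\tfrac14$ while $\sum_{i\in A'}\alpha_i>\tfrac34$, and then transferring the norming functional from $A'$ down to $A$ --- is exactly the paper's argument. Your direct averaging estimate in $2)\Rightarrow 3)$ is also fine; the paper gets the same bound by citing \cite[Lemma 3.3]{AAGM1}.

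There is, however, one genuine gap, at the very first step of $2)\Rightarrow 3)$: you claim that since $B$ is $1$-norming you can choose $y^*\in B$ with $y^*\bigl(\sum_i\alpha_i y_i\bigr)>1-\rho(\varepsilon)^2$. That is not what $1$-norming gives. The paper's definition reads $\Vert y\Vert=\sup\{\vert y^*(y)\vert : y^*\in B\}$, with absolute values, and $B$ need not be symmetric: for $Y=\mathbb{R}$ the singleton $B=\{-\mathrm{id}\}$ is $1$-norming, yet no element of $B$ is positive at $y=1$. So all you may assume is $\vert y^*\bigl(\sum_i\alpha_i y_i\bigr)\vert>1-\rho(\varepsilon)^2$; if the value is large and negative, your set $A=\{i: y^*(y_i)>1-\rho(\varepsilon)\}$ can be empty and the averaging estimate collapses. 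The paper repairs this with the symmetry of $M_Y^4$: since $(-y_i)_{i\le 4}\in M_Y^4$ whenever $(y_i)_{i\le 4}\in M_Y^4$, one replaces $(y_i)$ by $(-y_i)$ when the norming functional is negative on the convex combination. Applying condition $2)$ to $(-y_i)$ then produces $(w_i)_{i\le 4}\in M_Y^4$ with $\Vert w_i-(-y_i)\Vert<\varepsilon$ and $\bigl\Vert\sum_{i\in A}w_i\bigr\Vert=\vert A\vert$, and the element $z_i:=-w_i$ satisfies $(z_i)_{i\le 4}\in M_Y^4$, $\Vert z_i-y_i\Vert<\varepsilon$ and $\bigl\Vert\sum_{i\in A}z_i\bigr\Vert=\bigl\Vert\sum_{i\in A}w_i\bigr\Vert=\vert A\vert$, while the averaging estimate applied to $y^*$ and $(-y_i)$ still yields $\sum_{i\in A}\alpha_i>1-\varepsilon$. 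With this one patch your proof is complete.
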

\begin{proof}
    Clearly 1) implies 2).

    \noindent
    2) $\Rightarrow$ 3)
    \newline
    Assume that $Y$ satisfies condition 2).  For each  $0<\varepsilon < 1$  let be $ \rho(\varepsilon)<\varepsilon$ the positive real number  satisfying  Definition \ref{def-AHSPL4} for every element $y^{*}\in B$. We take  $\nu(\varepsilon)=(\rho(\varepsilon))^2$.
    %HE PUESTO EL CUADRADO EN LUGAR DEL CUBO EN LA ELECCION DE $\eta$

    Let   $ (y_i)_{i \le 4} \in M_Y^4$  and assume that the convex combination  $\sum_{i=1}^{4} \alpha_i y_i$   satisfies   $\bigl\Vert  \sum_{i=1}^{4} \alpha_iy_i \bigr \Vert  > 1 - \nu(\varepsilon) $. Since $(-y_i)_{i \le 4} \in M_Y^4$, by  using $(-y_i)  $ instead of $(y_i)$, if needed,  since $B$ is a $1$-norming set, there is  $y^{*}\in B$ such that
    $$
   y^{*}\biggl( \sum_{i=1}^{4} \alpha_i y_i \biggr) =    \sum_{i=1}^{4} \alpha_iy^{*}(y_i) >  1 - \nu(\varepsilon) = 1 - \rho (\varepsilon) ^2 .
    $$
    By \cite[Lemma 3.3]{AAGM1} the set $A$ given by  $A:=\{ i\le 4 : y^*(y_i)> 1-\rho (\varepsilon) \}$
    satisfies
    %\}$, A es no vacio y por \cite[Lema 3.3]{Original} se cumple que
    $$
    \sum_{i\in A}\alpha_i \geq 1 - \frac{\nu(\varepsilon)}{\rho (\varepsilon)}> 1- \varepsilon.
    $$
    By assumption there is an element $(z_i )_{i \le 4}  \in M_Y^4$  such that  $\| z_i - y_i\| < \varepsilon$ for each  $i \leq 4$ and  $\|\sum_{i \in A} z_i \| = |A|$.

    \noindent
    3) $\Rightarrow$ 1)
    \newline
Now we assume that  $Y$ satisfies condition 3).
    Let be $0<\varepsilon < 1$ and  $\nu(\varepsilon)$ the positive real number satisfying the assumption. We will show that  $\gamma(\varepsilon)=\nu(\frac{\varepsilon}{4})$ satisfies  Definition \ref{def-AHSPL4}.

    Let $(y_i)_{i \le 4} \in M_Y^4$  and assume that for some   nonempty set  $A \subset \{ 1,2,3,4 \}$ and  $y^{*} \in S_{Y^{*}}$ it is satisfied that  $y^{*}(y_i)>1-\gamma(\varepsilon)$ for each  $i\in A$.   We define the following nonnegative real numbers
    $$
    \alpha_i =
    \begin{cases}
    \frac{1}{|A|}            & \mbox{if} \sep i  \in A   \\
    0  & \mbox{if} \sep  i \in \{ 1,2,3,4\} \backslash  A.
    \end{cases}
    $$
    Clearly $\sum _{i=1}^ 4 \alpha _i =1$ and  we also have that
    $$
    \Big\| \sum_{i =1}^{4}\alpha_iy_i \Big\| = \frac{\Big\| \sum_{i \in A}y_i \Big\| }{|A|}\geq \frac{y^{*}\Big( \sum_{i \in A}y_i \Big )}{|A|}> 1-\nu\Big(\frac{\varepsilon}{4}\Big).
    $$

    By assumption there is a set   $C\subset \{ 1,2,3,4\}$ and $(z_i)_{i\le 4} \in  M_Y^4$  such that
    \begin{enumerate}
        \item[i)] $\sum_{i\in C}\alpha_i>1-\frac{\varepsilon}{4}> \frac{3}{4},$
        \item[ii)]  $\|z_i-y_i\|<\frac{\varepsilon}{4}  \sep \text{for each  }\sep  i \le 4,$
        \item [iii)]  $ \bigl \Vert \sum _{ i \in C} z_i \bigr\Vert  = \vert C \vert $.
    \end{enumerate}

    If  $A\subset C$ then the proof will be finished since condition iii)  implies that  $\bigl \Vert \sum _{ i \in A} z_i \bigr\Vert  = \vert A \vert $.  In case that there is some $i_0 \in A\setminus C$,  we put  $B= \{i \le 4: i \ne i_0\}$ 
    and so 
    $$
    1- \frac{1}{\vert A \vert} =  \frac{|A|-1}{|A|}=\sum_{ i \in B} \alpha_i\geq \sum_{i\in C} \alpha_i >\frac{3}{4}.
    $$
    Then  $\vert A \vert >  4$, which is a contradiction. Hence $A\subset C$ and  we proved  that $Y$ has the AHSp-$\ell_\infty^4$.

    Of course, if $\gamma $ satisfies Definition \ref{def-AHSPL4}, then $\rho= \gamma$ also  satisfies condition 2). 
    In case that 2) is satisfied with the function $\rho $, we showed that  condition 3) is also satisfied with the function  $\nu= \rho ^2$. Lastly if we assume that 3) is true with a function $\nu$ we know  that $Y$ has the AHSp-$\ell_\infty ^4$  with  the function $ \varepsilon \mapsto \nu \bigl( \frac{\varepsilon}{4}\bigr)$ because of the proof of 3) $\Rightarrow $ 1).
\end{proof}

%{\bf  Next  section should be deleted (only the title)  and we %have put the results in the right place }

%\section{Properties of the $\mathcal{T}$-sets}

\section{A characterization of the  spaces $Y$ such that the pair $( \ell_\infty^4, Y)$ has the Bishop-Phelps-Bollob{\'a}s property}

In  this section  we show  that the pair $(\ell_\infty ^4,Y)$ has the Bishop-Phelps-Bollob{\'a}s property for operators if and only if $Y$ has the AHSp-$\ell_\infty ^4$. Some technical results will make the proof easier. As usual, we denote by $\co (A)$ the convex hull of a subset $A$ of a linear space.

\begin{lemma}
    \label{le-faces}
    If $x \in E_1$ and $x(i)\le x(i+1)$ for $2\le i \le 3$, then $x \in \co \, \mathcal{(B)}$.
 It is satisfied that  $E_1= \cup_{k=1} ^6 \co (\{ v_i : i \in A_k\})$, where 
$$
A_1=     \{1,2,3,4\},  \sep A_2= \{1,3,4,5\}, \sep A_3=  \{1, 4, 6, 7\},
$$
$$
A_4=  \{1, 2, 4, 8\}, \sep A_5=   \{ 1,4,5,6\}\sep  \text{\rm and} \sep A_6= \{1,4,7,8\}.
$$
Indeed  for every $1 \le k \le 6$,  $\{ v_i: i \in A_k\}$ is the image under an   appropriate  linear isometry on $\ell_\infty ^4$ of $\mathcal {B}$.
\end{lemma}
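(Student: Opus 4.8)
The plan is to establish the lemma in three stages, mirroring the three assertions in the statement. First I would prove the opening claim: if $x \in E_1$ satisfies the monotonicity $x(2)\le x(3)\le x(4)$, then $x \in \co\,\mathcal{B}$. Since $\mathcal{B}=\{v_1,v_2,v_3,v_4\}$ is a basis by Lemma \ref{le-basis}, I can write $x$ in barycentric form using the biorthogonal functionals from that lemma, namely $x = \tfrac{x(1)+x(2)}{2}v_1 + \tfrac{x(3)-x(2)}{2}v_2 + \tfrac{x(4)-x(3)}{2}v_3 + \tfrac{x(1)-x(4)}{2}v_4$. Because $x \in E_1$ we have $x(1)=1$ and $|x(i)|\le 1$ for all $i$, so each coefficient is nonnegative precisely under the hypotheses $x(2)\ge -1$ (giving the first coefficient $\ge 0$), $x(2)\le x(3)$, $x(3)\le x(4)$, and $x(4)\le 1$ (giving the last coefficient $\ge 0$). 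The four coefficients sum to $1$ by a direct telescoping computation, so $x$ is a genuine convex combination of the $v_i$, which is the desired conclusion.

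Next I would prove the covering identity $E_1 = \bigcup_{k=1}^6 \co(\{v_i : i\in A_k\})$. The inclusion $\supseteq$ is immediate since each $v_i \in E_1$ and $E_1$ is convex. For $\subseteq$, I would use the final assertion of the lemma as the engine: for each $k$ there is a linear isometry $U_k$ of $\ell_\infty^4$ fixing the first coordinate (so that $U_k(E_1)=E_1$) with $U_k(v_i)=$ the $i$-th element of the sorted set $\{v_j:j\in A_k\}$, and these isometries are exactly sign-changes and permutations of the last three coordinates. The isometries permute the extreme points $v_1,\dots,v_8$ of $E_1$ (Lemma \ref{le-ext-E1}), and I would check that the six orderings of the coordinates $x(2),x(3),x(4)$ correspond to the six index sets $A_k$. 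Given an arbitrary $x\in E_1$, its last three coordinates can be sorted into increasing order by one such isometry $U_k$; then $U_k^{-1}x$ satisfies the monotonicity hypothesis, so by the first part $U_k^{-1}x \in \co\,\mathcal{B}$, and applying $U_k$ gives $x \in \co(\{v_i:i\in A_k\})$.

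The main obstacle, and the step requiring genuine bookkeeping, is the final assertion: identifying, for each $k$, the explicit linear isometry carrying $\mathcal{B}$ onto $\{v_i:i\in A_k\}$ and verifying the correspondence with the six coordinate orderings. The isometries of $\ell_\infty^4$ fixing $E_1$ are precisely the maps $x\mapsto (x(1), \epsilon_2 x(\sigma(2)), \epsilon_3 x(\sigma(3)), \epsilon_4 x(\sigma(4)))$ where $\sigma$ permutes $\{2,3,4\}$ and $\epsilon_i\in\{\pm 1\}$; I would need to select, for each $A_k$, the appropriate such map and confirm it sends $v_1,\dots,v_4$ to the listed generators. The cleanest way to organize this is to observe that each $A_k$ is obtained from $A_1=\{1,2,3,4\}$ by the ordering convention: since $v_1=(1,1,1,1)$ already has its tail sorted, $A_1$ corresponds to the identity, and the remaining five index sets arise from the other five orderings of the tail coordinates, each realized by the corresponding signed permutation. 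I would present one representative computation in detail and remark that the others follow by the same routine verification, since the claim is that the whole configuration is isometrically homogeneous.
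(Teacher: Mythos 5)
Your proposal is correct and follows essentially the same route as the paper: the monotone case via the biorthogonal functionals of Lemma \ref{le-basis}, then sorting the last three coordinates of an arbitrary $x \in E_1$ by a permutation isometry $T_\sigma$ (which preserves $E_1$ and fixes $v_1, v_4$) and pulling back $\co\,\mathcal{B}$ to identify the six sets $A_k$. The only difference is one of explicitness: where you defer the ``bookkeeping'' to a representative computation, the paper tabulates the images $T_\sigma(\{v_2,v_3\})$ for all six permutations of $\{2,3,4\}$ (and note that plain permutations suffice; the sign-changes you mention are never needed).
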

\begin{proof}
If $x \in E_1$, by Lemma     \ref{le-basis} we know that
\begin{equation}
\label{x-convex}
x= \frac{ 1 +  x(2)}{2} v_1 + \sum _{i=2}^3 \frac{x(i+1)- x(i)}{2} v_i +  \frac{1-
x(4)}{2} v_4.
\end{equation}
In case that $x(2)\le x(3 ) \le x(4)$ notice that    $x$ is expressed in
\eqref{x-convex}  as a convex combination of $\{ v_i: 1 \le i \le 4\}$.

For each  permutation $\sigma $ of  $\{ 2,3,4\}$ we define  the linear  isometry
$T_\sigma $ on $\ell_{\infty} ^4$ given by
$$
T_\sigma (x)=\bigl( x(1),x(\sigma (2)), x(\sigma (3)), x(\sigma (4)) \bigr) \seg (x
\in \R^4).
$$
Notice that $T_\sigma $ preserves $E_1$.

If $x \in E_1$ and  $\sigma$ is a permutation of $\{2,3,4\}$   is such that  $ x
(\sigma (2)) \le x(\sigma (3)) \le x(\sigma (4)) $  we know that the element
$T_\sigma (x)$  can be expressed as a convex combination of $\{ v_i: 1 \le i
\le 4\}$. Hence  $x=T_{\sigma^-1 } (T_ \sigma (x))$ can be expressed as a convex
combination of $\{ T_{\sigma^-1 } ( v_i) : 1 \le i \le 4 \}$. So it suffices to
compute the images by $T_\sigma $  of $\{ v_i: 1 \le i \le 4\}$, where $\sigma $ is
any permutation of $\{ 2,3,4\}$. Notice that the elements $ v_1$ and $v_4$ are
invariant by all these isometries.  So it suffices to evaluate the image of the
elements $\{v_2, v_3\} $.

We include the results in the following table, where we denote by $I$ the identity
and $\tau_{i,j} $ the transposition of the elements $i$ and $j$ on $ \{2,3,4\}$

\vskip5mm

$$
\begin{array}{|r|l|}
    \hline
\sigma  &  T_\sigma (\{v_2, v_3\})\\
    \hline
I &  \{v_2, v_3\}\\
\hline
\tau_{2,3}  & \{v_3 , v_5 \} \\
    \hline
\tau_{2,4}  & \{ v_6, v_7 \} \\
    \hline
\tau_{3,4}  &  \{ v_2,v_8 \}\\
    \hline
\tau_{2,3} \circ  \tau_{3,4}  & \{ v_5,v_6 \} \\
    \hline
\tau_{2,3} \circ  \tau_{2,4}  & \{v_7 , v_8 \} \\
    \hline
\end{array}
$$

\vskip5mm

Then we obtained that $E_1= \cup_{i=1} ^6 \co \{ v_j: j \in A_i \}$, where the sets
$A_i$  are given by
$$
A_1= \{ 1,2,3,4\}, \sep  A_2=\{ 1,3,4,5\},  \sep A_3 =\{ 1,4,6,7\},
$$
$$
A_4 =\{ 1,2,4,8\}, \sem A_5= \{ 1,4,5,6\} \sep \text{and} \sep  A_6=\{ 1,4,7,8\}.
$$
\end{proof}

The next result gives  a procedure to change an element $u_0$ close to $\co \{ v_i:1 \le i \le 4\}$ by a new one satisfying more requirements.

\begin{lemma}
\label{le-close-face}
Assume that $0 < \varepsilon < \frac{1}{2}$, $x_0 \in \co \{ v_i:1 \le i \le 4\}$
and   $u_0 \in E_1$ satisfies that $\Vert u_0 - x_0 \Vert < \varepsilon$. Then
there is $v_0 \in E_1$ such that $\Vert v_0 - u_0 \Vert < 3 \varepsilon$ and such
that there is  a set $A \subset \{ i \in \N : i \le 8\}$  satisfying that   $u_0$
and  $v_0$ can be written as convex combinations as follows
$$
u_0= \sum _{i \in A} \beta _i v_i , \sep v_0= \sum _{i \in A, i \le 4} \gamma _i
v_i
$$
and also
$$
  \gamma _i > 0 \sep \text{\rm for some} \sep i \ \Rightarrow \ \beta _i > 0.
$$
\end{lemma}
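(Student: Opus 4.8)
The plan is to pass to coordinates and reduce the whole statement to an assertion about the order statistics of the last three coordinates of $u_0$. First I would record the geometry: by Lemma \ref{le-faces}, $x_0 \in \co\,\mathcal{B}$ means precisely that $x_0(2) \le x_0(3) \le x_0(4)$ (with $x_0(1)=1$), and since $\|u_0-x_0\|<\varepsilon$, for every $2 \le i < j \le 4$ we obtain the inversion estimate
\[
u_0(i) - u_0(j) < \bigl(x_0(i) - x_0(j)\bigr) + 2\varepsilon \le 2\varepsilon ,
\]
so any two of the last three coordinates of $u_0$ that are out of nondecreasing order differ by less than $2\varepsilon$. Using Lemma \ref{le-faces} I locate the face $\co\{v_i : i \in A_k\}$ containing $u_0$ (determined by the ordering of $u_0(2),u_0(3),u_0(4)$) and, transporting the biorthogonal functionals of Lemma \ref{le-basis} by the sorting isometry $T_\sigma$, write $u_0 = \sum_{i \in A_k}\beta_i v_i$ as a convex combination. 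Putting $A = \{i : \beta_i>0\}$ and $G_+ = A \cap \{1,2,3,4\}$, the goal reduces to producing $v_0 \in \co\{v_i : i \in G_+\}$ with $\|v_0-u_0\|<3\varepsilon$: such a $v_0$ lies in $\co\,\mathcal{B}\subset E_1$, and writing $v_0 = \sum_{i \in G_+}\gamma_i v_i$ makes the implication $\gamma_i>0 \Rightarrow i \in G_+ \Rightarrow \beta_i>0$ automatic, with $A$ the required index set.

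The key computation handles the extreme vertices for free. In every one of the six faces the $v_1$- and $v_4$-coefficients of $u_0$ are
\[
\beta_1 = \frac{1 + \min_i u_0(i)}{2}, \qquad \beta_4 = \frac{1 - \max_i u_0(i)}{2},
\]
the extrema taken over $i \in \{2,3,4\}$, as one checks by applying $T_\sigma$ and Lemma \ref{le-basis} and noting that $v_1$ and $v_4$ are fixed by every $T_\sigma$. Consequently, for any sorted candidate $v_0 = (1,m_1,m_2,m_3)$ built from the same order statistics $m_1 \le m_2 \le m_3$ of $u_0$, Lemma \ref{le-basis} gives $\gamma_1 = \tfrac{1+m_1}{2} = \beta_1$ and $\gamma_4 = \tfrac{1-m_3}{2} = \beta_4$, so the compatibility of the $v_1$- and $v_4$-coefficients is automatic for any rearrangement preserving the extreme order statistics. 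Only the middle coefficients $\gamma_2 = \tfrac{m_2-m_1}{2}$ (attached to $v_2$) and $\gamma_3 = \tfrac{m_3-m_2}{2}$ (attached to $v_3$) can cause trouble, and only when $v_2 \notin G_+$ or $v_3 \notin G_+$, in which case the corresponding middle gap must be collapsed to $0$.

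I would then construct $v_0$ face by face, guided by which middle vertices are available: by the sets in Lemma \ref{le-faces}, $v_2$ occurs only in $A_1,A_4$ and $v_3$ only in $A_1,A_2$. For $A_1$, $u_0$ is already sorted and $v_0 = u_0$ works. For $A_3,A_5,A_6$ both middle gaps must vanish, so $v_0 = (1,s,s,s) \in \co\{v_1,v_4\}$; for $A_2$ only the lower gap, for $A_4$ only the upper gap must be collapsed. In each case I collapse the prescribed coordinates to a common value chosen so as to preserve the extreme order statistics (so that $\gamma_1=\beta_1$ and $\gamma_4=\beta_4$ are not disturbed); by the inversion estimate every coordinate is then displaced by less than $2\varepsilon$, giving $\|v_0-u_0\|<2\varepsilon<3\varepsilon$.

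The main obstacle is precisely the interaction of these collapses: collapsing one middle gap moves the median $m_2$ and can destroy the match of the other middle coefficient with its $\beta$, so the construction must be organized into subcases according to which of the boundary equalities $\min_i u_0(i) = -1$, $\max_i u_0(i) = 1$, $m_1 = m_2$, $m_2 = m_3$ hold, choosing the collapse target consistently in each. The only subcases admitting no consistent choice are those forcing two incompatible clampings at once; such a configuration would require an out-of-order pair of coordinates of $u_0$ to take the values $1$ and $-1$, hence to differ by $2$, contradicting the inversion estimate $u_0(i)-u_0(j)<2\varepsilon$ since $\varepsilon<\tfrac12$. Thus every admissible configuration yields a valid $v_0$, completing the proof.
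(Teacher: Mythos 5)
Your proposal is correct, and it reaches the lemma by a genuinely different route than the paper. The paper also begins by placing $u_0$ on the faces of Lemma \ref{le-faces} and writing $u_0=\sum_{i\in A}\beta_i v_i$, but it never returns to coordinates: grouping the five nontrivial faces into three cases ($A_2\cup A_5$, $A_3$, $A_4\cup A_6$), it applies one functional per case ($v_2^*$, $x\mapsto\frac{x(4)-x(2)}{2}$, $v_3^*$) to $x_0-u_0$ to conclude $\sum_{i\in A\setminus B}\beta_i<\varepsilon$ for $B=A\cap\{1,2,3,4\}$, and then simply takes $v_0=\frac{1}{\sum_{i\in B}\beta_i}\sum_{i\in B}\beta_i v_i$, the renormalized ``good part'' of $u_0$'s own expansion. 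Since each $\gamma_i$ is then proportional to $\beta_i$, the implication $\gamma_i>0\Rightarrow\beta_i>0$ is automatic and no analysis of boundary configurations is needed; the price is the weaker bound $\Vert v_0-u_0\Vert<\varepsilon\bigl(1+\frac{1}{1-\varepsilon}\bigr)<3\varepsilon$. Your coordinate-collapsing construction buys the sharper bound $2\varepsilon$ (your inversion estimate is precisely the paper's functional estimate read coordinatewise), but pays with the subcase bookkeeping that the renormalization bypasses. One caution on your write-up: the recipe in your third paragraph, choosing the collapse value ``so as to preserve the extreme order statistics (so that $\gamma_1=\beta_1$ and $\gamma_4=\beta_4$ are not disturbed)'', is not achievable in the faces $A_3,A_5,A_6$, nor in the degenerate subcases of $A_2,A_4$, where all three coordinates must collapse to a single value $s$: there one cannot match both extreme coefficients, and only the implications $\gamma_1>0\Rightarrow\beta_1>0$ and $\gamma_4>0\Rightarrow\beta_4>0$ can be saved, by clamping $s=-1$ when $m_1=-1$ and $s=1$ when $m_3=1$. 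Your final paragraph in effect corrects this (that is what ``choosing the collapse target consistently'' has to mean), and its key claim is accurate: the only configurations admitting no consistent clamping are those forcing an out-of-order pair of coordinates to equal $-1$ and $1$, which the inversion estimate excludes since $\varepsilon<\frac{1}{2}$; the subcases you leave implicit do all close, each with displacement below $2\varepsilon$. So your argument is sound once that case analysis is written out in full, just notably longer than the paper's renormalization trick.
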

\begin{proof}
By assumption  we know that $x_0$ can be written as  $x_0= \sum_{i=1}^{4} \alpha_i v_i$, where $\alpha _i \ge 0$ for $i \le 4$ and $\sum _{i=1}^4 \alpha _i=1$.

By Lemma \ref{le-faces} $u_0\in \cup_{i=1}^6
\co (\{ v_i: i \in A_k\})$. If  $u_0 \in \co 
\{ v_i: i \in A_1\}=\{ v_i: i \le 4\}$ then the element $v_0=u_0$
satisfies the statement.  
Otherwise  we can express  
$u_0= \sum _{i\in A} \beta _i v_i$, where $\beta _i \ge 0$ for each $i \in A$ and
$\sum _{i \in A} \beta _i =1$ and it  suffices to
prove the claim in the following cases:

\noindent
 Case a) $A=A_2 \cup A_5 = \{1,3,4,5,6\}$. Since the functional $v_{2} ^*  $ given by
$ v_{2}^* (x)=\frac{ x(3) - x(2)}{2}$ belongs to the unit ball of
$(\ell_{\infty}^4)^*$,  in view of Lemma \ref{le-basis} and Remark \ref{re-vi-bad-vi-good} we obtain that
\begin{equation}
\label{beta5-6}
\alpha _2 + \beta _5 + \beta _6  = v_{2} ^* (x_0- u_0) \le \Vert x_0 - u_0 \Vert <
\varepsilon.
\end{equation}
 Case b) Assume that $A=A_3= \{ 1,4,6,7\}$. The functional $v^* $ given by
$ v^* (x)=\frac{ x(4) - x(2)}{2}$ belongs to the unit ball of $(\ell_{\infty}^4)^*$ and
satisfies
$$
v^* (v_i)=0, \sep i=1,4   \sem v^* (v_i)=1, \sep i=2, 3,   \sem  \text{and}\sem
v^* (v_i)=-1, \sep i=6,7.
$$
As a consequence we have that
\begin{equation}
\label{beta6-7}
\alpha _2 + \alpha _3 +  \beta _6  + \beta _7 = v^* (x_0- u_0) \le \Vert x_0 -
u_0 \Vert < \varepsilon .
\end{equation}
Case c) Assume that $A = A_4 \cup A_6=\{ 1,2,4,7,8\}$.  By Lemma \ref{le-basis} and Remark \ref{re-vi-bad-vi-good}  we obtain that 
\begin{equation}
\label{beta7-8}
\alpha _3 + \beta _7 + \beta _8  = v_{3} ^* (x_0- u_0) \le \Vert x_0 - u_0 \Vert <
\varepsilon.
\end{equation}

In each of the above cases, we take  $B= A \cap  \{1,2,3, 4\}$. Notice that in view
of \eqref{beta5-6}, \eqref{beta6-7} and \eqref{beta7-8},  it is satisfied that
\begin{equation}
\label{sum-beta-A-B}
\sum _{i \in A \backslash B } \beta _i < \varepsilon  \sep \Rightarrow \sep   \sum
_{i \in B } \beta _i > 1 - \varepsilon > \frac{1}{2}.
\end{equation}
We will  check now that the element $v_{0}  =\frac{1}{ \sum _{i \in B} \beta _i }
\sum _{i \in B } \beta _i v_i$ satisfies the requirements of the claim. Since  $B
\subset \{ 1,2,3,4\}$,  clearly $v_0\in  \co \{v_1, v_2, v_3, v_4\} \subset E_1$.

Since $B \subset A$ the element $v_0$ also  satisfies
\begin{eqnarray*}
    \Vert v_{0} - u_0 \Vert &\le&  \biggl\Vert   \frac{1}{\sum _{ i \in B}  \beta _i } \sum _{ i \in B}  \beta _i v_i   - \sum _{ i \in B }  \beta _i v_i  \biggr\Vert + \Bigl \Vert \sum _{ i \in A \backslash B}  \beta _i v_i \Bigr\Vert \\
    &\le& \frac{1}{\sum _{ i \in B }  \beta _i} -1 +  \sum _{ i \in A \backslash B}  \beta _i \\
    &=& \frac{1}{ \sum_{i \in B} \beta _i }   \sum _{ i \in A \backslash B}  \beta _i +  \sum _{ i \in A \backslash B}  \beta _i  \\
    &=&  \sum _{ i \in A \backslash B}  \beta _i  \biggl( 1 + \frac{1}{  \sum _{ i \in B}  \beta _i}\biggr) \\
    &<& \varepsilon  \biggl( 1 + \frac{1}{ 1- \varepsilon } \biggr) < 3 \varepsilon  \sem \text{(by \eqref{sum-beta-A-B})}.
% &  < 3 \varepsilon
\end{eqnarray*}
Let us notice that $B \subset \{ 1,2,3,4\} \cap A$. If we take  $\gamma_i= \frac{ \beta _i}{
\sum _{i \in B} \beta_i}$ for every $i \in B$ then  $v_0 =  \sum _{i \in B} \gamma _i v_i$.  As a consequence, in case that 
$\gamma _i > 0 $ for some $i \in B$ we obtain that $\beta _i > 0$. So the element $v_0$ satisfies all the required conditions.
\end{proof}

%{\bf The previous results will be used in order to prove that BPBp for the pair %$(\ell_\infty^4,Y)$ implies that $Y$ has the AHSp-$\ell_\infty^4$. TO BE DELETED}

 Next result is the version of \cite[Theorem 2.9]{ABGKM2} for $ \ell _\infty ^4$, where the analogous result was obtained for $\ell_\infty ^3$.
In our case, the fact that  the domain has dimension $4$, and so the  norm of an element $T \in L(\ell_\infty ^4, Y)$ is the maximum
 of the norm of the evaluation of $T$ at eight extreme points of $B_{\ell_\infty^4}$ makes the proof more  complicated comparing to the case that the domain has dimension $3$.

\vskip10mm

\begin{theorem}
\label{teo-char}
    For every  Banach space $Y,$ the pair $(\ell_\infty^4,Y)$ has the  Bishop-Phelps-Bollobás property if and only if $Y$ has the approximate hyperplane sum property for $\ell_\infty^4$.
    
     Moreover,  if $(\ell_\infty ^4, Y) $ satisfies Definition \ref{def-BPBp} with the function $\eta $, then $Y$ has the AHSp-$\ell_\infty ^4 $ with 
     \linebreak[4]
     $\gamma (\varepsilon )= \eta \bigl( \frac{\varepsilon }{48}\bigr) $. In case that  $Y$ has the AHSp-$\ell_\infty ^4 $ for the function  $\gamma $ (see Definition \ref{def-AHSPL4}), the pair
    	    	 $(\ell_\infty ^4, Y) $ satisfies  BPBp with  the function $\eta (\varepsilon)= \gamma^2 \bigl( \frac{\varepsilon }{3} \bigr) $. 
  \end{theorem}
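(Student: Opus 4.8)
The plan is to prove the two implications separately, keeping track of the moduli so as to obtain exactly the stated relations $\gamma(\varepsilon)=\eta(\varepsilon/48)$ and $\eta(\varepsilon)=\gamma^2(\varepsilon/3)$. Throughout I would use Proposition~\ref{prop-ident} to pass freely between an operator $T$ in the unit ball of $L(\ell_\infty^4,Y)$ and the tuple $\Phi(T)=(T(v_i))_{i\le4}\in M_Y^4$. I would also record, applying that proposition to $S-T$, the inequality $\|S-T\|\le 3\max_{j\le4}\|S(v_j)-T(v_j)\|$, so that a small bound on the four generators controls the operator norm.

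For the implication \emph{AHSp-$\ell_\infty^4\Rightarrow$ BPBp}, given $T\in S_{L(\ell_\infty^4,Y)}$ and $x_0\in S_{\ell_\infty^4}$ with $\|T(x_0)\|>1-\eta(\varepsilon)$, I would first use sign changes and the isometries $T_\sigma$ of Lemma~\ref{le-faces} to reduce to $x_0\in\co\,\mathcal{B}$, say $x_0=\sum_{i=1}^4\alpha_iv_i$. Setting $y_i=T(v_i)$, Proposition~\ref{prop-ident} gives $(y_i)_{i\le4}\in M_Y^4$ and $\|\sum_i\alpha_iy_i\|=\|T(x_0)\|$. Since $Y$ has AHSp-$\ell_\infty^4$ with $\gamma$, Proposition~\ref{pro-char} shows condition~3) holds with $\nu=\gamma^2$; applying it at parameter $\varepsilon/3$ (this is exactly where $\|T(x_0)\|>1-\gamma^2(\varepsilon/3)$ is used) yields $A\subset\{1,2,3,4\}$ and $(z_i)_{i\le4}\in M_Y^4$ with $\sum_{i\in A}\alpha_i>1-\varepsilon/3$, $\|z_i-y_i\|<\varepsilon/3$ and $\|\sum_{i\in A}z_i\|=|A|$. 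I would then set $S=\Phi^{-1}((z_i))$ and $u_0=(\sum_{i\in A}\alpha_i)^{-1}\sum_{i\in A}\alpha_iv_i\in E_1$. The generator bound gives $\|S-T\|<\varepsilon$; the identity $\|\sum_{i\in A}z_i\|=|A|$ produces a common norming functional, whence $\|S(u_0)\|=1$ and so $\|S\|=1$; and the elementary estimate $\|u_0-x_0\|\le 2\sum_{i\notin A}\alpha_i<2\varepsilon/3<\varepsilon$ closes this direction after transporting back through the isometry.

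For the implication \emph{BPBp $\Rightarrow$ AHSp-$\ell_\infty^4$}, suppose $(y_i)_{i\le4}\in M_Y^4$, $A\neq\emptyset$ and $y^*\in S_{Y^*}$ with $y^*(y_i)>1-\gamma(\varepsilon)=1-\eta(\delta)$, where $\delta=\varepsilon/48$. I would take $T=\Phi^{-1}((y_i))$ and $x_0=|A|^{-1}\sum_{i\in A}v_i\in E_1$, so that $y^*(T(x_0))>1-\eta(\delta)$ forces $\|T\|>1-\eta(\delta)$. Normalizing to $\tilde T=T/\|T\|$ and applying the BPBp at $\delta$ gives $S$ and $u_0\in S_{\ell_\infty^4}$ with $\|S(u_0)\|=1$, $\|u_0-x_0\|<\delta$ and $\|S-\tilde T\|<\delta$; since $1-\|T\|<\delta$, the candidates $z_i:=S(v_i)$ satisfy $(z_i)\in M_Y^4$ and $\|z_i-y_i\|<2\delta<\varepsilon$. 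To arrange $\|\sum_{i\in A}z_i\|=|A|$, I would choose $w^*\in S_{Y^*}$ with $w^*(S(u_0))=1$ and put $\phi=w^*\circ S\in S_{(\ell_\infty^4)^*}$. I would then replace $u_0$ by a point $u_0'\in E_1$ in the same face of $B_{\ell_\infty^4}$ determined by $\phi$ — leaving the coordinates with $\phi_j\neq0$ unchanged, setting the first coordinate to $1$ and the others to those of $x_0$ — so that $\|S(u_0')\|=1$ and still $\|u_0'-x_0\|<\delta$. Lemma~\ref{le-close-face} now yields $v_0\in\co\,\mathcal{B}$ with $\|v_0-u_0'\|<3\delta$ and a representation $v_0=\sum_{i\le4}\gamma_iv_i$ whose support $B^*=\{i\le4:\gamma_i>0\}$ is contained in the support $A'$ of the convex representation $u_0'=\sum_{i\in A'}\beta_iv_i$ (the set denoted $A$ in that lemma). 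Since $\phi$ attains its norm at $u_0'$, each term of this convex combination is extremal, so $w^*(S(v_i))=1$ for every $i\in A'$, in particular $w^*(z_i)=1$ for $i\in B^*$.

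The crux, and the step I expect to be the main obstacle, is to show that the \emph{target} set $A$ is contained in $B^*$; that is, that the exact norm attainment delivered by the BPBp — which a~priori occurs at an uncontrolled face possibly involving $v_5,\dots,v_8$ — actually covers the indices in $A$. Here I would compare the biorthogonal coordinates of $v_0$ and $x_0$, both lying in $\co\,\mathcal{B}$: from $\|v_0-x_0\|<4\delta$ and $\|v_i^*\|=1$ (Lemma~\ref{le-basis}) one gets $\bigl|\gamma_i-\tfrac1{|A|}\bigr|<4\delta$ for $i\in A$, and since $4\delta=\varepsilon/12<1/|A|$ this forces $\gamma_i>0$, i.e.\ $A\subset B^*$. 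Consequently $w^*(z_i)=1$ for every $i\in A$, hence $\|\sum_{i\in A}z_i\|=|A|$, and $(z_i)_{i\le4}$ witnesses the AHSp-$\ell_\infty^4$ condition for $\varepsilon$. It is precisely because $u_0'$ may involve the extra extreme points $v_5,\dots,v_8$ that the passage to $v_0\in\co\,\mathcal{B}$ via Lemma~\ref{le-close-face}, rather than working with $u_0'$ directly, is indispensable: only for points of $\co\,\mathcal{B}$ are the coefficients read off by the functionals $v_i^*$.
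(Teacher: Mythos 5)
Your proposal is correct and achieves both stated moduli. The direction AHSp-$\ell_\infty^4\Rightarrow$BPBp coincides with the paper's proof step by step (same operator $S$, same point $u_0$, same three estimates, same modulus $\eta(\varepsilon)=\gamma^2(\varepsilon/3)$). In the converse direction your organization is genuinely different: the paper never proves Definition \ref{def-AHSPL4} directly; it shows that the BPBp with $\eta$ yields condition 3) of Proposition \ref{pro-char} with $\nu(\varepsilon)=\eta(\varepsilon/12)$ --- for an arbitrary convex combination $\sum_i\alpha_iy_i$, with the index set in the conclusion constructed rather than prescribed --- and then obtains $\gamma(\varepsilon)=\nu(\varepsilon/4)=\eta(\varepsilon/48)$ from the implication 3) $\Rightarrow$ 1) of that proposition, whose counting argument forces the prescribed set $A$ into the constructed one. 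You instead run the argument directly on the uniform combination $\alpha_i=1/|A|$ and inline that counting step as the estimate $\bigl|\gamma_i-\tfrac{1}{|A|}\bigr|<4\delta<\tfrac{1}{|A|}$, giving $A\subset B^*$; this is the same mechanism, specialized, which is why you land on the same constant $48$. The remaining ingredients (Proposition \ref{prop-ident}, Lemma \ref{le-close-face}, the biorthogonal functionals of Lemma \ref{le-basis}, a Hahn--Banach norming functional) are common to both proofs. Two small points: your reduction to $E_1$ via $\phi=w^*\circ S$ is heavier than the paper's, which simply writes $u_0$ as a convex combination of $(\pm 1,u_0(2),u_0(3),u_0(4))$ and uses $u_0(1)>0$; and your construction implicitly requires $u_0(1)=1$ whenever $\phi_1\neq 0$ (true, since norm attainment gives $\phi_ju_0(j)=|\phi_j|$ for all $j$ while $|u_0(1)-1|<\delta$), so that your two prescriptions for the first coordinate are consistent --- this deserves to be stated. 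What the paper's detour buys is the stronger, reusable convex-combination statement; what your route buys is a shorter, self-contained proof of exactly the stated equivalence.
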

\begin{proof}
Assume that the pair  $(\ell_\infty ^4, Y)$ satisfies the BPBp with the function $ \eta $. We will prove that $Y$ satisfies condition 3) in Proposition \ref{pro-char} with the function $\nu(\varepsilon)= \eta \bigl( \frac{ \varepsilon}{12}\bigr) $.  As a consequence $Y$ has the AHSp-$\ell_\infty ^4$ with the function $\gamma (\varepsilon)= \eta 
 \bigl( \frac{ \varepsilon}{48}\bigr) $.

Let us fix $0 < \varepsilon < 1$. 
Assume that  $(y_i )_{i \le 4} \in M_Y^4$  and  that
$\sum_{i=1}^{4} \alpha_i y_i $ is a convex combination  satisfying that 
$$
\Big\Vert  \sum_{i=1}^{4} \alpha_iy_i \Big\Vert  >  1 - \nu(\varepsilon).
$$
Let $T$ be the element in $B_{L(\ell_\infty ^4,Y)}$   that $(y_i)_{i \le 4}$ represents  in view of  Proposition  \ref{prop-ident}. The
element
\linebreak[4]
 $x_0= \sum_{i=1}^{4} \alpha_i v_i$ satisfies that $x_0 \in
S_{\ell_\infty^4}$ and by assumption we know that
$$
\bigl\Vert T(x_0) \bigr\Vert = \Bigl \Vert \sum_{i=1}^{4} \alpha_i y_i   \Bigr
\Vert >    1 - \nu(\varepsilon) = 1 - \eta\Big(\frac{\varepsilon}{12}\Big)> 1 - \frac{\varepsilon}{12} >   0.
$$
Since the pair  $(\ell_\infty ^4, Y)$ has the BPBp, there are  $u_0\in
S_{\ell_\infty^4}$  and   $S\in S_{L(\ell_\infty^4,Y)}$   satisfying the
following conditions
\begin{equation}
\label{Su0}
\Vert Su_0\Vert =1,  \sem \Vert u_0 - x_0\Vert <\frac{\varepsilon}{12}  \sem
\text{and}  \sem \Big \Vert S-\frac{T}{\Vert T\Vert } \Big \Vert
<\frac{\varepsilon}{12} .
\end{equation}

Notice that  $ \vert u_0(1) - 1\vert  =\vert u_0(1) - x_0(1)\vert \leq \Vert  u_0 -
x_0\Vert  < 1$ and $\Vert u_0\Vert =1$, so  $ 0 < u_0(1) \le 1$. Since we clearly
have that the element $u_0$ can be written as a convex combination as follows 
$$
u_0= \frac{1+u_0(1)}{2}(1,u_0(2),u_0(3),u_0(4)) +
\frac{1-u_0(1)}{2}(-1,u_0(2),u_0(3),u_0(4)),
$$
$ 1 + u_0(1) > 0$ and $S$ attains its norm at $u_0$, then  $S$ also attains its norm at
$(1,u_0(2),u_0(3),u_0(4))$.  The previous element  belongs to $S_ {\ell_\infty ^4}
$ and satisfies that
$$
\Vert (1,u_0(2),u_0(3),u_0(4)) - x_0\Vert  \leq \Vert u_0- x_0\Vert  <
\frac{\varepsilon}{12}.
$$
As a consequence, by changing $u_0$ by $(1, u_0(2), u_0(3), u_0(4))$, if needed, we can assume that $u_0 \in E_1$.

By Lemma \ref{le-close-face} there is $v_0 \in E_1$ such that $\Vert v_0 - u_0
\Vert < \frac{\varepsilon}{4}$ and such that there is  a set 
\linebreak[4]
$A \subset \{ i \in \N
: i \le 8\}$ satisfying that   $u_0$ and  $v_0$ can be written as convex
combinations as follows
$$
u_0= \sum _{i \in A} \beta _i v_i , \sep v_0= \sum _{i \in A, i \le 4} \gamma _i
v_i
$$
and also
\begin{equation}
\label{gamma-beta}
  \gamma _i > 0 \sep \text{\rm for some} \sep i \ \Rightarrow \ \beta _i > 0.
\end{equation}
By \eqref{Su0} $S$ attains its norm at $u_0$, hence  by Hahn-Banach Theorem there
is an element $y^* \in S_{Y^*}$ such that $y^* (S(u_0))=1$. Since
$$
1=y^* (S(u_0)) = \sum _{i\in A} \beta _i y^* S(v_i),
$$
$\beta _i \ge 0 $ for each $i \in A$ and $\sum _{ i\in A} \beta _i=1$, we have that
$$
 \beta _i \in A, \ \beta _i > 0 \ \Rightarrow \ y^* \bigl( S(v_i) \bigr)=1.
 $$
As a consequence, by  \eqref{gamma-beta} we obtain that
\begin{equation}
\label{y*Svi}
i \in A, \  i \le 4,\  \gamma_i \ne 0 \  \Rightarrow \  y^* \bigl( S(v_i)\bigr) =
1.
\end{equation}
The element $v_{0 }$   satisfies  that
\begin{align}
\nonumber
\Vert  v_0 - x_0 \Vert  & \leq \Vert v_0 - u_0 \Vert  + \Vert  u_0 - x_0\Vert\\
%\nonumber
\label{v0-x0}
& <  \frac{\varepsilon}{4} + \frac{\varepsilon}{12}  \seg \text{(by \eqref{Su0})}\\
\nonumber & = \frac{ \varepsilon}{3}.
\end{align}
By Lemma \ref{le-basis} the functionals $\{ v_{i}^* : 1 \le i  \le 4\} $ belong to
$B_{(\ell_\infty ^4)^* }$ and are the biorthogonal functionals of the basis
$\mathcal{B}$.  In view of \eqref{v0-x0} we get that
\begin{equation}
\label{alphai-gammai}
\vert \alpha _i - \gamma _i \vert = \vert  v_{i}^* (x_0- v_{0}) \vert \le
\Vert x_0 - v_{0} \Vert < \frac{\varepsilon}{3} , \sem \forall i \le 4.
\end{equation}

We write $C:= \{ i\leq 4 : i \in A,  \gamma_i \neq 0\}$.  If $C=\{1,2,3,4\}$ then $\sum_{i
\in C} \alpha _i=1$. Otherwise, from \eqref{alphai-gammai} we obtain that
$$
\sum_{i\in C}\alpha_i= 1- \sum_{i\le 4, i \not\in C}\alpha_i > 1
-\frac{\varepsilon}{3} ( 4 -  \vert C \vert ) \geq 1 - \varepsilon.
$$
Finally we check that $(z_i)_{i \le 4}=(S(v_i))_{i \le 4}$ is the desired  element in $M_Y^4$.  Clearly $(z_i)_{i \le 4} \in M_Y^4$  since $S\in S_{ L(\ell_\infty^4, Y)}$ (see Proposition  \ref{prop-ident}).  By
\eqref{y*Svi} we have that
$$
\Big\Vert  \sum_{i\in C}z_i   \Big\Vert  \ge y^*  \biggl( \sum_{i\in C}  S(v_i)
\biggr) = \vert C \vert.
$$
It remains to show only that $\Vert z_i - y_i \Vert < \varepsilon $ for each $i \le
4$. Indeed for each $1 \le i \le 4$ we have that
\begin{align*}
 \Vert  z_i - y_i\Vert  & = \Vert  S(v_i)- T(v_i) \Vert   \\
 & \leq \Big\Vert  S(v_i) - \frac{T(v_i)}{\Vert T\Vert }  \Big\Vert  + \Big\Vert
\frac{T(v_i)}{\Vert T\Vert } - T(v_i)  \Big\Vert        \\
 & \leq \Big\Vert  S -  \frac{T}{\Vert T\Vert }  \Big\Vert  + 1 - \Vert T\Vert
  \\
& < \frac{\varepsilon}{6}  < \varepsilon  \sem \text{(by \eqref{Su0})}.
\end{align*}
 We proved that   $Y$ satisfies condition 3) of Proposition \ref{pro-char} with $\nu (\varepsilon ) = \eta\bigl(\frac{\varepsilon}{12}\bigr)$.

Assume that  $Y$ satisfies the  AHSp-$\ell_\infty ^4$ with the function $\gamma $.   By Proposition  \ref{pro-char}, $Y$ also  satisfies condition 3) in that result with the function $\nu (\varepsilon) = \gamma ^2 (\varepsilon )$.  We will show that that the pair  $( \ell _\infty ^4, Y)$ has the BPBp with the function $\eta (\varepsilon) = \nu(\frac{\varepsilon}{3})= \gamma  ^2 (\frac{\varepsilon}{3})$.

 Let be  $0<\varepsilon <1$ and
assume that   $T\in S_{L(\ell_\infty^4,Y)}$ and   $x_0 \in
S_{\ell_\infty^4}$  are such that
$$
\|Tx_0\| >1-\eta (\varepsilon) = 1- \nu \Bigl (\frac{\varepsilon}{3} \Bigr).
$$
By using an appropriate isometry, if needed, in view of Lemma \ref{le-faces},  we can assume that
\linebreak[4]
 $x_0 \in
\text{co}\{v_i : 1 \le i \le 4\}$.  Let us write $x_0$ as
a convex combination $x_0=\sum_{i=1}^{4}\alpha_i v_i$. In view of
\linebreak[4]
 Proposition  \ref{prop-ident} the set $(y_i)_{i \le 4}= (T(v_i))_{i \le 4} \in M_Y^4$.  So we have that 
$$
\biggr \Vert \sum_{i=1}^{4}\alpha_i y_i \biggl\Vert  = \Vert T (x_0) \Vert  >1-\nu
\Big(\dfrac{\varepsilon}{3}\Big).
$$
By assumption $Y$ satisfies condition  3) in Proposition \ref{pro-char}, so there is a (nonempty) set  $A\subset
\{1,2,3,4 \}$, and $(z_i)_{i \le 4} \in M_Y^4$  such that
%y $z^{*}\in S_{Y^{*}}$ tal que
\begin{equation}
\label{sum-A-alphai}
\sum_{i\in A}\alpha_i>1-\frac{\varepsilon}{3} > 0, \seg
\|z_i-y_i\|<\frac{\varepsilon}{3} \sep \text{for all}\  i \le 4
\end{equation}
and
\begin{equation}
\label{A-zi-sum}
\Bigl\Vert \sum _{ i \in A} z_i \Bigr \Vert = \vert A \vert .
\end{equation}

%\begin{enumerate}
%   \item[i)] $\sum_{i\in A}\alpha_i>1-\frac{\varepsilon}{3},$
%   \item[ii)]  $\|z_i-y_i\|<\frac{\varepsilon}{3} \ \ \ \text{para cada } i \le 4,$
%   \item [iii)]  $z^\ast(z_i)=1 \ \ \  \text{para cada } i\in A.$
%\end{enumerate}
Let $S$ be the unique linear operator from $\ell_{\infty}^4$ to $Y$  such that
$S(v_i)=z_i$ for every  $1\leq i \leq 4$.  Since   $(z_i)_{i \le 4} \in M_Y^4$,   $S \in B_{L(\ell_\infty^4,Y)}$ by Proposition  \ref{prop-ident}. The
element $u_0$ given by $u_0=\sum_{i\in A}\frac{\alpha_i}{\sum_{i\in A}\alpha_i}v_i$
belongs to $S_{\ell_{\infty}^4}$.  By  equation \eqref{A-zi-sum} the operator $S$
attains its norm at $u_0$ since
$$
%\begin{equation}
%\nonumber
1  =  \frac{ \Vert \sum _{i \in A} \alpha _i z_i \Vert}{  \sum _{i \in A} \alpha _i
}   = \Vert S (u_0) \Vert \le \Vert S \Vert \le 1.
$$
So $S \in S_{L(\ell_\infty^4,Y)}$. From Proposition  \ref{prop-ident} and
\eqref{sum-A-alphai} we obtain that $\Vert S - T \Vert < \varepsilon $.  We write
\linebreak[4]
$C=\{  i \in \N: i\le 4, i \notin A\}$. Finally we
obtain that
\begin{align*}
 \Vert  x_0 - u_0\Vert  & =  \Bigl\Vert  \sum_{i=1}^{4}\alpha_i v_{i} - \sum_{i\in
A}\dfrac{\alpha_i}{\sum_{i\in A}\alpha_i}v_i
 \Big\Vert \\
& =  \Big \| \Big(1-\frac{1}{\sum_{i\in A}\alpha_i}\Big) \sum_{i\in A}\alpha_iv_i + \sum_{ i \in C  }\alpha_i v_{i}\Big \| \\
& \leq \Big| 1-\frac{1}{\sum_{i\in A}\alpha_i}\Big| \sum_{i\in A}\alpha_i\|v_i\| + \sum_{i \in C} \alpha_i \|v_{i}\| \\
& \leq \frac{\sum_{i \in C}\alpha_i}{\sum_{i\in A}\alpha_i}\sum_{i\in A}\alpha_i+ \sum_{i \in C}\alpha_i \\
&= 2 \sum_{i \in C }\alpha_i<  \frac{ 2\varepsilon}{3} < \varepsilon  \seg \text{(by \eqref{sum-A-alphai})}.
\end{align*}
 We proved that the pair $(\ell_\infty^4, Y)$  has the BPBp with $\eta (\varepsilon)= \nu \bigl( \frac{\varepsilon}{3} \bigr)= \gamma ^2 \bigl(\frac{\varepsilon}{3} \bigr) $.
\end{proof}

%INCLUIR UN RESSULTADO QUE DIGA QUE SI $(c_0,Y)$ tiene BPBp, entonces   %$(\ell_\infty^n,Y)$ tienen BPBp uniformemente.

\section{Examples of spaces with the approximate  hyperplane  sum property for       $\ell_\infty^4$}

The goal of this section  is to provide classes of  Banach spaces with the approximate  hyperplane sum property for $\ell_\infty ^4$.

As we already mentioned in the introduction  the pair $(X,Y)$ has  BPBp whenever $X$ and $Y$ are finite-dimensional normed spaces \cite[Proposition 2.4]{AAGM1}.  By applying this result to $X= \ell_\infty ^4$, and  in view of Theorem \ref{teo-char} we obtain that finite-dimensional spaces have the AHSp-$\ell_\infty ^4$. We will also  provide a simple direct proof of this fact.

\begin{proposition}
			\label{pro-finite}
				Every finite-dimensional normed space has the  AHSp-$\ell_\infty^4$.
\end{proposition}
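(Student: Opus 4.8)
The plan is to exploit finite-dimensionality through a compactness argument, establishing the statement by contradiction rather than producing $\gamma(\varepsilon)$ explicitly. Fix $0 < \varepsilon < 1$ and suppose no admissible $\gamma$ exists for this $\varepsilon$. Then for each positive integer $n$, the candidate tolerance $1/n$ fails, so there are an element $(y_i^n)_{i \le 4} \in M_Y^4$, a nonempty subset $A_n \subset \{1,2,3,4\}$, and a functional $y_n^* \in S_{Y^*}$ with $y_n^*(y_i^n) > 1 - 1/n$ for every $i \in A_n$, yet no tuple $(z_i)_{i \le 4} \in M_Y^4$ satisfies both $\Vert z_i - y_i^n \Vert < \varepsilon$ for all $i \le 4$ and $\Vert \sum_{i \in A_n} z_i \Vert = |A_n|$.

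First I would remove the dependence on $n$ of the index set: since $\{1,2,3,4\}$ has only finitely many subsets, passing to a subsequence lets me assume $A_n = A$ for a single nonempty $A$. Then I invoke finite-dimensionality: $(B_Y)^4$ and $S_{Y^*}$ are compact, so a further subsequence yields $y_i^n \to y_i$ for each $i \le 4$ and $y_n^* \to y^*$ with $y^* \in S_{Y^*}$. As $M_Y^4$ is cut out by finitely many closed conditions (each demanding membership in the closed ball $B_Y$), it is closed, so the limit $(y_i)_{i \le 4}$ again lies in $M_Y^4$. Letting $n \to \infty$ in $y_n^*(y_i^n) > 1 - 1/n$ gives $y^*(y_i) \ge 1$ for $i \in A$, and since $\Vert y_i \Vert \le 1$ and $\Vert y^* \Vert = 1$ this forces $y^*(y_i) = 1$ for every $i \in A$.

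The decisive observation is that the limit tuple is itself the sought perturbation. From $y^*(y_i) = 1$ on $A$ we get $\Vert \sum_{i \in A} y_i \Vert \ge y^*\bigl( \sum_{i \in A} y_i \bigr) = |A|$, while the opposite inequality is automatic; hence $\Vert \sum_{i \in A} y_i \Vert = |A|$. Thus $(z_i)_{i \le 4} := (y_i)_{i \le 4}$ belongs to $M_Y^4$ and meets requirement (iii). Because $y_i^n \to y_i$, for all large $n$ along the subsequence we have $\Vert y_i - y_i^n \Vert < \varepsilon$ for every $i \le 4$, so this fixed tuple fulfils every condition of the conclusion for such $n$ --- contradicting the choice of $(y_i^n)$. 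This contradiction produces some tolerance $\gamma_0 = 1/n_0 > 0$ that works; replacing it by $\min\{\gamma_0, \varepsilon/2\}$, and noting that shrinking the tolerance only strengthens the hypothesis $y^*(y_i) > 1 - \gamma$ and hence preserves validity, we obtain a $\gamma(\varepsilon) < \varepsilon$ as required.

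I do not anticipate a serious obstacle: compactness of balls in finite dimensions supplies everything, and no quantitative bound on $\gamma$ is sought. The only points needing care are bookkeeping --- fixing $A$ before extracting convergent subsequences, so that the target $\Vert \sum_{i \in A} z_i \Vert$ refers to one set throughout --- and verifying that $M_Y^4$ is closed, which guarantees the limit tuple is admissible.
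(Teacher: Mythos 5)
Your proof is correct and follows essentially the same route as the paper's: a compactness-and-contradiction argument in which the limit tuple itself serves as the required perturbation. The only cosmetic difference is that you also extract a convergent subsequence of the functionals $y_n^*$, whereas the paper sidesteps this by phrasing the hypothesis as $\Vert \sum_{i\in A_n} y_i^n\Vert > \vert A_n\vert(1-\gamma_n)$ and concluding $\Vert \sum_{i\in A} y_i\Vert = \vert A\vert$ directly from norm convergence.
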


\begin{proof}
We will argue by contradiction. 
	Let $Y$ be a finite-dimensional space and assume that $Y$ does not have   the   AHSp-$\ell_\infty^4$.  So there is $\varepsilon _0 > 0$   for which Definition \ref{def-AHSPL4} is not satisfied. Hence   there is  a sequence $\{\gamma_n \}$ of positive  real numbers satisfying $ \{\gamma_n \} \to 0 $ and also   for each natural number $n$,  there  are an element  $(y_i^n)_{i \le 4} \in  M_{Y}^4$  and a nonempty  set $A_n \subset  \{ 1,2,3,4\}$  satisfying 
	\begin{equation}
	\label{sum-A}
	\Bigl\Vert \sum_{i \in A_n } y^n_i \Bigr \Vert > |A_n| \bigl( 1 - \gamma _n \bigr)
	\end{equation}
	and also
	\begin{equation}
	\label{dist-z-y}
	(z_i)_{i \le 4} \in M_{Y}^4, \sep \Bigl\Vert \sum_{i \in A_n } z_i \Bigr\Vert = \vert A_n \vert  \sep \Rightarrow \sep
	\max \{ \Vert y_i^n-z_i \Vert  : i\leq 4 \}\ge  \varepsilon _0,
	\end{equation}
%	where $\tri \ \tri$  denotes the norm on $Y^4$ given by
	$$
%	\tri (y_i)_{i \le 4} \tri = \max \{ \Vert y_i \Vert : i \le 4\}.
	$$
%(\textbf{IN (4.2) , INSTEAD OF TRIPLE NORM WE USE MAX %NORM} )	.
Since $Y$ is finite dimensional, $M_{Y}^4$ is a compact set of $Y^4$. By taking into account that  $\{i \in \N: i \le 4\}$ is finite,   and  passing to a subsequence, we can assume that there is a set $A \subset \{i \in \N: i \le 4\}$ such that $A_n=A$, for every $n$,  and also that  for each  $i \le 4$ the sequence $\{y_i^n\}_n$ converges to $y_i$, so $(y_i)_{i \le 4} \in M_{Y}^4$. From condition \eqref{sum-A} it follows that $\bigl \Vert \sum _{i \in A } y_i  \bigr \Vert = \vert A \vert $.   In view of  condition \eqref{dist-z-y} this is a contradiction.
\end{proof}

Recall that a Banach space $Y$ is {\it uniformly convex } if for every $ \varepsilon
> 0$ there is $ 0 < \delta < 1$ such that
$$
u,v \in B_Y, \ \ \frac{\Vert u + v \Vert }{2}
> 1 - \delta \ \ \Rightarrow \ \  \Vert u - v \Vert < \varepsilon \ .
$$
In such a case, {\it the modulus of convexity} of $Y$ is the function defined by
$$
\delta_Y (\varepsilon):= \inf \Bigl\{ 1 - \frac{\Vert u + v
	\Vert}{2} : u,v \in B_Y, \Vert u - v \Vert \ge \varepsilon
\Bigr\} \ .
$$

As a consequence of \cite[Theorem 2.5]{KiI} the pair $(\ell_\infty ^4, Y)$ has the Bishop-Phelps-Bollob{\'a}s property  for operators in  case that $Y$ is  uniformly convex. By  Theorem 
\ref{teo-char}  uniformly convex spaces have AHSp-$\ell_\infty ^4$.  However, we provide a direct proof of that  fact.

\begin{lemma}
	\label{le-3-points}
	Assume that $ (y_i)_{ i \le 4} \in M_Y ^4$, $y^* \in S_{Y^*}$ and $\delta > 0$ satisfies that
	$$
	i< k \le 4, \sep  y^* (y_i), y^* (y_k ) > 1 - \delta \ \Rightarrow \ y^* (y_j) > 1 - 2 \delta \sep \text{\rm for each } \sep i < j < k.
	$$
\end{lemma}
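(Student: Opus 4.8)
The plan is to read off the conclusion directly from the defining inequality of $M_Y^4$, without invoking any of the finer structure developed earlier. Fix indices $i<j<k$ in $\{1,2,3,4\}$ with $y^*(y_i)>1-\delta$ and $y^*(y_k)>1-\delta$. Since $1\le i<j<k\le 4$, membership of $(y_i)_{i\le 4}$ in $M_Y^4$ (see Notation \ref{not-M4Y}) yields precisely that $y_i-y_j+y_k\in B_Y$, that is, $\Vert y_i-y_j+y_k\Vert\le 1$.

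The single computation that drives the proof is to evaluate the norm-one functional $y^*$ on this element. Because $y^*\in S_{Y^*}$ and $\Vert y_i-y_j+y_k\Vert\le 1$, we have
$$
y^*(y_i)-y^*(y_j)+y^*(y_k)=y^*(y_i-y_j+y_k)\le \Vert y_i-y_j+y_k\Vert\le 1,
$$
so that $y^*(y_j)\ge y^*(y_i)+y^*(y_k)-1$. Substituting the two hypotheses $y^*(y_i),y^*(y_k)>1-\delta$ gives $y^*(y_j)>2(1-\delta)-1=1-2\delta$, which is the asserted bound for every $j$ strictly between $i$ and $k$.

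There is no genuine obstacle here: the entire argument is the observation that the three-term alternating sums built into the definition of $M_Y^4$, once tested against a norm-one functional, force the middle value $y^*(y_j)$ to be controlled by the two outer values. The only point to keep straight is the combinatorics of the indices, namely that any $j$ with $i<j<k$ produces a legitimate triple $1\le i<j<k\le 4$, so that the defining inequality of $M_Y^4$ applies verbatim; no appeal to the explicit vectors $v_i$ or to the earlier lemmas is required.
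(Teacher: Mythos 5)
Your proof is correct and is essentially the same argument as the paper's: both apply the norm-one functional $y^*$ to the element $y_i-y_j+y_k$, which lies in $B_Y$ by the definition of $M_Y^4$, and rearrange the resulting inequality $y^*(y_i)-y^*(y_j)+y^*(y_k)\le 1$ to bound $y^*(y_j)$ from below.
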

\begin{proof}
	Since 	 $( y_i)_{ i \le 4} \in M_{ Y }^4$, then for every $i < j < k$ we have that $\Vert y_i-y_j + y_k \Vert \le 1$.  If we assume that  $ y^*(y_i), y^* (y_k) > 1 - \delta$ then  we have that
	$$
	2- 2\delta - y^{*}(y_j)<y^{*}(y_i-y_j+y_k)\leq \Vert y_i-y_j+y_k\Vert \leq 1.
	$$
	As a consequence,
	$$
	y^{*}(y_j) > 1 - 2  \delta.
	$$	
\end{proof}

\begin{proposition}
	\label{pro-uc}
	Every uniformly convex   Banach space has the approximate  hyperplane  sum property for
	$\ell_\infty^4$.
 Moreover, if $\delta_Y$ is the modulus of convexity of $Y$, then $Y$
		has the AHSp-$\ell_\infty^4$ with the function  $\gamma(\varepsilon)=  \min \bigl\{ \frac{\delta_Y (\varepsilon)}{2},\frac{\varepsilon}{6} \bigl\}$.
\end{proposition}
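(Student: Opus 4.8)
The plan is to verify Definition \ref{def-AHSPL4} directly with $\gamma(\varepsilon)=\min\bigl\{\tfrac{\delta_Y(\varepsilon)}{2},\tfrac{\varepsilon}{6}\bigr\}$, splitting the argument according to $\abs{A}$. Fix $0<\varepsilon<1$, take $(y_i)_{i\le 4}\in M_Y^4$, a nonempty $A\subset\{1,2,3,4\}$ and $y^*\in S_{Y^*}$ with $y^*(y_i)>1-\gamma(\varepsilon)$ for each $i\in A$. When $\abs{A}=1$ there is nothing new to do: the hypothesis gives $\|y_j\|\ge y^*(y_j)>1-\gamma(\varepsilon)\ge 1-\tfrac{\varepsilon}{6}$ for the single index $j\in A$, so Proposition \ref{pro-AHSP-1-element} applies verbatim and produces the required $(z_i)_{i\le 4}$. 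Hence the whole difficulty lies in the case $\abs{A}\ge 2$, and this is where both quantities in the definition of $\gamma$ enter.

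So assume $\abs{A}\ge 2$ and set $z=\frac{\sum_{i\in A}y_i}{\|\sum_{i\in A}y_i\|}\in S_Y$. Since $\|\sum_{i\in A}y_i\|\le\abs{A}$ while $y^*\bigl(\sum_{i\in A}y_i\bigr)>\abs{A}\,(1-\gamma(\varepsilon))$, one gets $y^*(z)>1-\gamma(\varepsilon)$. Because $\gamma(\varepsilon)\le\tfrac{\delta_Y(\varepsilon)}{2}$, for each $i\in A$ the average satisfies $\tfrac12\|z+y_i\|\ge\tfrac12\bigl(y^*(z)+y^*(y_i)\bigr)>1-\gamma(\varepsilon)>1-\delta_Y(\varepsilon)$, so the definition of the modulus of convexity yields $\|z-y_i\|<\varepsilon$; this is the role of the first term in the minimum. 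I would then pass to the interval hull $A'=\{j:\min A\le j\le\max A\}$: applying Lemma \ref{le-3-points} with $\delta=\gamma(\varepsilon)$, every $j\in A'\setminus A$ satisfies $y^*(y_j)>1-2\gamma(\varepsilon)\ge 1-\delta_Y(\varepsilon)$, whence the same modulus estimate gives $\|z-y_j\|<\varepsilon$. Now set $z_i=z$ for $i\in A'$ and $z_i=y_i$ for $i\notin A'$; then $\|z_i-y_i\|<\varepsilon$ for all $i$, and $\bigl\|\sum_{i\in A}z_i\bigr\|=\abs{A}\,\|z\|=\abs{A}$, as required.

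It remains to check that $(z_i)_{i\le 4}\in M_Y^4$, i.e.\ $\|z_{i_1}-z_{i_2}+z_{i_3}\|\le 1$ for every triple $i_1<i_2<i_3$. A short inspection shows that when $\abs{A'}\ge 3$ each such expression, using the cancellation $z-z=0$ together with $(y_i)\in M_Y^4$, collapses either to $z\in S_Y$ or to a single $y_j\in B_Y$ (a triple with exactly one index in $A'$ would require two indices outside $A'$, which is impossible here). The only genuine obstacle is therefore $A'=A\in\bigl\{\{1,2\},\{2,3\},\{3,4\}\bigr\}$, a pair of consecutive indices: there exactly one triple expression fails to cancel, and it is only known a priori to have norm at most $1+\theta$, where $\theta:=1-\tfrac12\|\sum_{i\in A}y_i\|<\gamma(\varepsilon)$.

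For that last case I would not keep $z_i=y_i$ on the two coordinates outside $A$, but instead replace them by convex combinations of the form $(1-t)y_i\pm t\,z$, with $t=\theta/(1+\theta)$ and the signs chosen according to where $z$ sits in the offending triple. Being convex combinations of elements of $B_Y$, these automatically remain in $B_Y$ (this is exactly what defeats a naive shift, which could push a vector of norm $1$ out of the ball), while $\|z_i-y_i\|\le 2t<2\gamma(\varepsilon)\le\tfrac{\varepsilon}{3}$ — here the second term $\tfrac{\varepsilon}{6}$ of $\gamma$ is used. Writing the two relevant original triples as $w_1,w_2\in B_Y$ and using the identity $z-z_{i_2}+\cdots=\tfrac12(w_1+w_2)\pm\theta z$ for the offending expression, the choice $t=\theta/(1+\theta)$ is precisely what cancels its $z$-component, leaving $(1-t)\,\tfrac12(w_1+w_2)\in B_Y$; the remaining triples reduce to $z$ or a single modified coordinate as before. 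The main obstacle is thus this final step: the constraint $(z_i)\in M_Y^4$ consists of the closed inequalities $\|z_{i_1}-z_{i_2}+z_{i_3}\|\le 1$ with no slack, so the correction repairing the one bad triple must simultaneously preserve all the others and keep every modified vector in $B_Y$, which is what forces the convex-combination device rather than an arbitrary small perturbation.
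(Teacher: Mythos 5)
Your proposal is correct and takes essentially the same route as the paper's proof: the singleton case is dispatched by Proposition \ref{pro-AHSP-1-element}, Lemma \ref{le-3-points} enlarges $A$ to its interval hull, uniform convexity collapses every coordinate of the hull onto a single unit vector, and for a consecutive pair the two outside coordinates are repaired by convex combinations of the form $(1-t)y_i \pm t z$ chosen to cancel the excess. The only differences are cosmetic: the paper first invokes Lemma \ref{le-t-sets} to assume $\min A = 1$ and normalizes $y_1$ rather than the average $\sum_{i\in A}y_i$, and its correction coefficients $a=\gamma(\varepsilon)/(1+\gamma(\varepsilon))$, $b=a/2$ are universal, whereas yours ($t=\theta/(1+\theta)$) depend on the data; both devices yield the same conclusion.
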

\begin{proof}
	Assume that $Y$ is a   uniformly  convex  Banach space with  modulus of convexity $\delta_Y$.  Given
	\linebreak[4]
	 $0 <
	\varepsilon < 1$, we define $\gamma(\varepsilon)=  \min \bigl\{ \frac{\delta_Y (\varepsilon)}{2},\frac{\varepsilon}{6} \bigl\}$. Assume that $y^\ast \in S_{Y^\ast }$, $(y_i)_{i \le 4} \in M_Y^4$  and 
%	\linebreak[4]
	 $\varnothing \ne A \subset \{ 1,2,3,4 \}$  is a set such
	that
	$$
	y^* (y_i)
	> 1 - \gamma(\varepsilon) > 0 , \sep \forall  i\in A.
	$$
	
By Lemma \ref{le-t-sets} we can assume that $\min A  =1$. In view of Lemma \ref{le-3-points}, the set 
\linebreak[4]
$C=\{ i\in \N: 1\le i \le \max A\}$ satisfies that	
$$
	y^* (y_i)
	> 1 -2 \gamma(\varepsilon) > 0 , \sep \forall  i\in C.
	$$
	
	Hence for every  $i,j \in C$
	we have that
	$$
	1-  \delta_Y (\varepsilon ) \le 1 - 2\gamma(\varepsilon) < y^* \Biggl( \frac{y_i +   y_j}{2} \Biggr)\le y^* \Biggl( \frac{  \frac{ y_i}{ \| y_i \| } + y_j  }{2} \Biggr)  \le \Biggl \Vert
	\frac{ \frac{ y_i}{ \| y_i \| } + y_j }{2}
	\Biggr\Vert.
	$$
	By the definition of the
	modulus of convexity it follows that 
	\begin{equation}
	\label{zi-yi-U}
	\Bigr\Vert   \frac{y_i}{\| y_i \|} - y_j \Bigr\Vert < \varepsilon, \seg \forall i,j \in C.
	\end{equation}
	We will show that there exists $(z_i)_{i \le 4} \in M_Y^4$
	satisfying that
	\begin{equation}
	\label{zi-yi-A}
	\Vert  z_i-y_i \Vert < \varepsilon, \forall i \le 4 \sem \text{   and } \sem \Big \Vert \sum_{i \in C } z_i\Big  \Vert = \vert C\vert .
	\end{equation}

	By Proposition \ref{pro-AHSP-1-element}  it suffices to show \eqref{zi-yi-A}  in case that  there is $2 \le k \le 4$ such that
	% the set 
	$C$ coincides with  the set $C_k=\{ i \in \N:  i \le k\}$.

	So let us fix $2 \le k \le 4$  and define  $ z_i= \frac{y_{1}}{\Vert y_{1} \Vert } $ for every $i \in C_k$. From \eqref{zi-yi-U} it follows that
	\begin{equation}
	\label{zi-yi-U2}
	\Vert z_i- y_i \| < \varepsilon, \seg \forall i \in C_k
	\end{equation}
	and it is trivially satisfied that 
	\begin{equation}
	\label{sum-Ak}
	%$$
	\Big \Vert \sum_{i \in C_k } z_i\Big  \Vert = \vert C_k\vert  .
	\end{equation}

	In case that $k=2$, we put  $a= \frac{\gamma(\varepsilon)}{1+\gamma(\varepsilon)}, b= \frac{\gamma(\varepsilon)}{2(1+\gamma(\varepsilon))}$ and
	$$
	z_3 =
	(1-a) y_3 + b z_1 , \sem z_4 =  (1-a) y_4 - b z_1.
	$$
	For each $i\in \{3,4\}$, it follows that
	$$
	\Vert z_i \Vert \le 1-a +b = 1- \frac{\gamma(\varepsilon)}{1+\gamma(\varepsilon)} +  \frac{\gamma(\varepsilon)}{2(1+\gamma(\varepsilon)) } \le 1
	$$
	and
	$$
	\Vert z_i - y_i \Vert \le a+b = \frac{\gamma(\varepsilon)}{1+\gamma(\varepsilon)} + \frac{\gamma(\varepsilon)}{2(1+\gamma(\varepsilon))} = \frac{3\gamma(\varepsilon)}{2(1+\gamma(\varepsilon)) } <
	\frac{3\gamma(\varepsilon)}{2} < \varepsilon.
	$$
	By  the last chain of inequalities and \eqref{zi-yi-U2} it is satisfied  that 
	$$
	\Vert z_i- y_i \| < \varepsilon, \seg \forall i \le 4.
	$$
	Since  for each $i\in \{3,4\}$ we have
	\begin{equation}
	\label{zi-yi-U3}
	\Vert z_1-z_2 + z_i \Vert = \Vert z_i \Vert  \le 1 
	\end{equation}
	and   for each  $j\in\{1,2\}$  it is clear that 
	\begin{align}
	\label{zi-yi-U4}
	\nonumber 
	\Vert z_j - z_3 + z_4 \Vert & = \Vert z_1 - z_3 + z_4 \Vert   \\
	\nonumber 
	& = \Vert (1-2b)  z_1 + (1-a) (-y_3 + y_4 )   \Vert  \\
	\nonumber  
	& = (1-a)
	\Vert   z_1 - y_3 + y_4 \Vert \\
	& \leq (1-a)  ( \Vert y_1 -y_3 + y_4 \Vert  + \Vert z_1 - y _1 \Vert ) \\
	\nonumber
	& \le    (1-a) \Bigl( 1 + \Bigl \Vert \frac{y_1} {\Vert y_1 \Vert } - y_1 \Bigr \Vert \Bigr) \\
	\nonumber 
	&  =(1-a)(2-\|y_1\|)<(1-a)(1+\gamma(\varepsilon))  = 1.
	\end{align}
	In view of \eqref{zi-yi-U3} and \eqref{zi-yi-U4}
	we obtain that $ (z_i)_{ i \le 4} \in M_Y ^4$.
	
	If $k=3$, we take $z_4=y_4$, in this case it is immediate to check that $ (z_i)_{ i \le 4} \in M_Y ^4$, also by the definition of $z_4$ and \eqref{zi-yi-U2} it follows that
	$$\Vert z_i- y_i \| < \varepsilon, \seg \forall i \le 4.
	$$
	Finally, if $k=4$, we have by definition that $ z_i=
	\frac{y_{1}}{\Vert y_{1} \Vert } $ for every $i \in \{1,2,3,4\}$, so it is  trivially satisfied  that  $ (z_i)_{ i \le 4} \in M_Y ^4$ and  in view of  \eqref{zi-yi-U2}  we also have that  $\Vert z_i- y_i \| < \varepsilon$, for each $i \in \{1,2,3,4\}$.

	We showed  that for every $2 \le k \le 4$ there exists  $ (z_i)_{ i \le 4} \in M_Y ^4$ satisfying  \eqref{zi-yi-A} for $C= C_k$,  so the proof is complete since $A\subset C$.
\end{proof}

As a consequence of \cite[Theorem 2.4]{AGKM}, \cite[Theorem 2.1]{ACKLM} and Theorem \ref{teo-char},  there is also  a  nontrivial class of spaces  containing  uniformly convex spaces and satisfying AHSp-$\ell_\infty ^4 $.

%\vskip10mm

The next statement can be shown  by using the same argument of the analogous  result  for AHSp-$\ell_\infty^3$ (see  \cite[Proposition 2.4]{ABGKM2}). For this reason we do not include the proof.

\begin{proposition}
		\label{pro-C0-Y}
		Let $L$ be a nonempty locally compact Hausdorff topological space and $Y$ a Banach space. Then
$\mathcal{C}_0(L,Y)$ has the AHSp-$\ell_\infty^4$ if, and only if, $Y$ has the
AHSp-$\ell_\infty^4$.
\end{proposition}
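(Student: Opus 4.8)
The plan is to exploit two structural facts about $C_0(L,Y)$ together with the reformulation in Proposition \ref{pro-char}. First, a continuous $Y$-valued family satisfies $(f_i)_{i\le 4}\in M^4_{C_0(L,Y)}$ precisely when $(f_i(t))_{i\le 4}\in M^4_Y$ for every $t\in L$, because each defining condition in Notation \ref{not-M4Y} has the form $\|\cdot\|\le 1$ and is verified pointwise under the sup-norm. In particular $M^4_Y$ is a \emph{convex} subset of $Y^4$ (each condition is convex, being the preimage of $B_Y$ under an affine map, and $(B_Y)^4$ is convex), a fact I will use repeatedly. Second, the set $B=\{\delta_t\otimes y^*: t\in L,\ y^*\in S_{Y^*}\}$, where $(\delta_t\otimes y^*)(f)=y^*(f(t))$, is a $1$-norming subset of $S_{(C_0(L,Y))^*}$.

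For the implication that $Y$ having AHSp-$\ell_\infty^4$ forces $C_0(L,Y)$ to have it, I would verify condition 2) of Proposition \ref{pro-char} for the norming set $B$ above. Fix $\delta_{t_0}\otimes y^*\in B$, an element $(f_i)_{i\le 4}\in M^4_{C_0(L,Y)}$ and a nonempty $A$ with $y^*(f_i(t_0))>1-\gamma$ for $i\in A$. Since $(f_i(t_0))_{i\le 4}\in M^4_Y$, the AHSp-$\ell_\infty^4$ of $Y$ produces $(w_i)_{i\le 4}\in M^4_Y$ with $\|w_i-f_i(t_0)\|$ small and $\|\sum_{i\in A}w_i\|=|A|$. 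I then lift by convex-combination interpolation: pick $\psi\colon L\to[0,1]$ with $\psi(t_0)=1$ supported on a neighborhood $U$ of $t_0$, and set $g_i=(1-\psi)f_i+\psi w_i$. For each $t$ the vector $(g_i(t))_{i\le 4}$ is a convex combination of $(f_i(t))_{i\le 4}$ and $(w_i)_{i\le 4}$, both in $M^4_Y$; by convexity $(g_i(t))_{i\le 4}\in M^4_Y$, hence $(g_i)_{i\le 4}\in M^4_{C_0(L,Y)}$. Because $g_i(t_0)=w_i$, one gets $\|\sum_{i\in A}g_i\|\ge\|\sum_{i\in A}w_i\|=|A|$, and the reverse bound is automatic; the distance $\|g_i-f_i\|=\sup_t\psi(t)\|w_i-f_i(t)\|$ is made smaller than $\varepsilon$ by shrinking $U$ so that the finitely many $f_i$ vary little near $t_0$. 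Thus 2) holds with, say, $\gamma_{C_0(L,Y)}(\varepsilon)=\gamma_Y(\varepsilon/2)$.

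For the converse I realize $Y$ inside $C_0(L,Y)$ via a bump. Given $(y_i)_{i\le 4}\in M^4_Y$, a nonempty $A$ and $y^*\in S_{Y^*}$ with $y^*(y_i)>1-\gamma$ for $i\in A$, fix $t_0$ and $\varphi\colon L\to[0,1]$ with $\varphi(t_0)=1$ and compact support, and put $f_i=\varphi\,y_i$. Then $(f_i)_{i\le 4}\in M^4_{C_0(L,Y)}$ and $(\delta_{t_0}\otimes y^*)(f_i)=y^*(y_i)>1-\gamma$ for $i\in A$. Applying the AHSp-$\ell_\infty^4$ of $C_0(L,Y)$ yields $(g_i)_{i\le 4}\in M^4_{C_0(L,Y)}$ with $\|g_i-f_i\|<\varepsilon$ and $\|\sum_{i\in A}g_i\|=|A|$. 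The continuous function $t\mapsto\|\sum_{i\in A}g_i(t)\|$ attains the value $|A|$ at some $t_1$; outside $\operatorname{supp}\varphi$ one has $\|g_i(t)\|=\|g_i(t)-f_i(t)\|<\varepsilon$, so $t_1\in\operatorname{supp}\varphi$. At $t_1$ necessarily $\|g_i(t_1)\|=1$ for $i\in A$, and since $\|g_i(t_1)-\varphi(t_1)y_i\|<\varepsilon$ this forces $\varphi(t_1)>1-\varepsilon$. I then set $z_i=g_i(t_1)$: evaluating the $M^4$ conditions at $t_1$ gives $(z_i)_{i\le 4}\in M^4_Y$, one has $\|\sum_{i\in A}z_i\|=|A|$, and $\|z_i-y_i\|\le\|g_i(t_1)-\varphi(t_1)y_i\|+(1-\varphi(t_1))\|y_i\|<2\varepsilon$ for all $i\le 4$. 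Rescaling $\varepsilon$ yields the AHSp-$\ell_\infty^4$ of $Y$.

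I expect the only genuine obstacle to lie in the converse: the element produced by the AHSp of $C_0(L,Y)$ need not attain its norm at the chosen base point, so I must control the point $t_1$ at which $\sum_{i\in A}g_i$ is maximal. The resolution above is that norm attainment at $t_1$ together with $\|g_i-f_i\|<\varepsilon$ forces $\varphi(t_1)$ to be close to $1$, whence $g_i(t_1)$ is automatically close to $y_i$; this is precisely the pointwise analogue of the $\ell_\infty^3$ argument and is the step that really uses the compact support of $\varphi$ and the exactness of the norm condition in Definition \ref{def-AHSPL4}.
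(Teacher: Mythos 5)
Your proof is correct. The paper actually omits its own proof, referring instead to the argument for the analogous $\ell_\infty^3$ statement in \cite[Proposition 2.4]{ABGKM2}; what you have written — the pointwise characterization of $M^4_{\mathcal{C}_0(L,Y)}$, the convexity of $M^4_Y$ used to lift via a Urysohn bump $g_i=(1-\psi)f_i+\psi w_i$ in one direction, and the embedding $y_i\mapsto\varphi\, y_i$ followed by evaluation at a point where $\sum_{i\in A}g_i$ attains its norm in the converse — is precisely that argument adapted to $\ell_\infty^4$, so your proposal takes essentially the same route the paper intends.
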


It is clear that  $\R$  has the  AHSp-$\ell_\infty^4$, so  
 by the previous result $\mathcal{C}_0(L)$ has the same property for any locally compact Hausdorff space $L$.

\vskip2mm

Our aim now is to prove that the space $\ell_1$ has the   AHSp-$\ell_\infty^4$. 
In what follows we will denote by  $u^* $ the element in $\ell_{1}^* $ given by
$$
u^* (x)=\sum _{n=1}^\infty x(n) \seg (x \in \ell_1).
$$

 The next simple result will be useful for this purpose.

\begin{lemma}
\label{lema-auxiliar}
Let be  $s,t \in \R^+$, $x,y,z \in \ell_{1}$. Assume that
$$
1-s \le u^*(x-y+z)  \sem \text{and}\sem \Vert x-y+z \Vert \le
1+t.
$$
Then there is  $w\in \ell_{1}$  such that
$$
w\ge z ,  \seg
\Vert w-z\Vert \le s+t  \seg \text{and}   \seg  x-y+w \ge 0.
$$
\end{lemma}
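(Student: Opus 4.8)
The plan is to define $w$ coordinatewise as the pointwise maximum of $z$ and $y-x$, and then to read off the required norm estimate from the elementary identity relating the $\ell_1$-norm and the functional $u^*$ through positive and negative parts.

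First I would set $w(n)=\max\{z(n),\,y(n)-x(n)\}$ for every $n$. Since $w(n)\ge z(n)$ for all $n$, we get $w\ge z$ immediately; and since $w(n)\ge y(n)-x(n)$, every coordinate of $x-y+w$ is nonnegative, i.e. $x-y+w\ge 0$. Writing $g=x-y+z$, the difference satisfies
$$
w(n)-z(n)=\max\{0,\,y(n)-x(n)-z(n)\}=\max\{0,-g(n)\}=g(n)^-,
$$
the negative part of $g$ at $n$. In particular $w-z\ge 0$ and $w-z$ is dominated by $\vert g\vert$, so $w-z\in\ell_1$ and hence $w=z+(w-z)\in\ell_1$.

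It then remains only to bound $\Vert w-z\Vert=\sum_n g(n)^-=\Vert g^-\Vert$. Here I would use the decomposition into positive and negative parts: $\Vert g\Vert=\Vert g^+\Vert+\Vert g^-\Vert$ while $u^*(g)=\Vert g^+\Vert-\Vert g^-\Vert$, so that $2\Vert g^-\Vert=\Vert g\Vert-u^*(g)$. The two hypotheses give $\Vert g\Vert\le 1+t$ and $u^*(g)\ge 1-s$, whence $2\Vert g^-\Vert\le(1+t)-(1-s)=s+t$. Thus $\Vert w-z\Vert=\Vert g^-\Vert\le\frac{s+t}{2}\le s+t$, which is even slightly stronger than the claimed bound, and $w$ has all three required properties.

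There is essentially no obstacle here: the whole content is the observation that the gap $\Vert g\Vert-u^*(g)$ equals twice the mass of the negative part of $g$, and that this gap is controlled by combining the lower bound on $u^*(g)$ with the upper bound on $\Vert g\Vert$. The choice $w=\max\{z,\,y-x\}$ is the minimal modification of $z$ forcing $x-y+w\ge 0$ while keeping $w\ge z$, which is precisely why it automatically realizes this optimal constant.
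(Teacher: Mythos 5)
Your proof is correct and is essentially the paper's own argument: your $w=\max\{z,\,y-x\}$ coincides coordinatewise with the paper's $w$ (defined there as $z$ on the set $P$ where $x-y+z\ge 0$ and as $y-x$ on its complement $N$), and both proofs reduce the problem to bounding the mass of the negative part of $g=x-y+z$. The only difference is that you use the exact identity $2\Vert g^-\Vert=\Vert g\Vert-u^*(g)$ where the paper uses the looser chain $\Vert g\chi_N\Vert=\Vert g\Vert-\Vert g\chi_P\Vert\le(1+t)-(1-s)$, which is why you obtain the slightly sharper constant $(s+t)/2$; this refinement is fine but not needed for the lemma as stated.
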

\begin{proof}
Define the sets  given by
$$
P= \{k\in \N: y(k)\le(x+z)(k)  \} \seg \text{and} \seg  N=\N  \backslash  P.
$$
Since $u^* \ge 0$,  we have that
\begin{eqnarray*}
1-s & \le & u^*(x-y+z) \le u^* \bigl( (x-y+z)\chi_P \bigr)    \\
&\le&  \Vert (x-y+z)\chi_P  \Vert \le \Vert x-y+z \Vert \\
&\le &  1 + t .
\end{eqnarray*}
Hence $\Vert (x-y+z)\chi_P\Vert \ge 1-s$, so
\begin{equation}
\label{xyz-N}
\Vert(x-y+z)\chi_N\Vert= \Vert x-y+z \Vert - \Vert (x-y+z)\chi_P\Vert \le 1+t - (1-s) = s+t.
\end{equation}
Let  $w\in \ell_1$  be the element given by
$$
w(k) =
\begin{cases}
z(k)            & \mbox{if} \sep k \in P   \\
(y-x)(k)  & \mbox{if} \sep   k \in N.
\end{cases}
$$
It is clear that
$
w\chi_P=z\chi_P $ and $(y-x)\chi_N \ge  z\chi_N$, so $w \ge z$.
Since
$$
(x-y+w)\chi_P=(x-y+z)\chi_P\ge 0 \seg \text{and} \seg  (x-y+w)\chi_N=0,
$$
it is satisfied that  $x-y+w\ge 0$.

In view of \eqref{xyz-N} we  also have that
\begin{align*}
\Vert w-z \Vert& = \Vert(w-z)\chi_N\Vert=\Vert(y-x-z)\chi_N\Vert \\
&  =\Vert(x-y+z)\chi_N\Vert \le s+t.
\end{align*}
\end{proof}

  Notice that in Lemma \ref{lema-auxiliar} the element $x$ satisfies the same assumptions that $z$. So under the same conditions we also obtain an element $v\ge x$ such that  $\Vert v-x\Vert \le s+t$  and $v-y+z \ge 0$.

\begin{proposition}
\label{prop-ell1}
There is a function $\rho : ]0,1[ \llll \R^+$ such that  the  space $\ell_{1}^n$  satisfies condition   2) of Proposition \ref{pro-char}  for such function, for each natural number  $n$.  Also the  space $\ell_1$ satisfies the previous statement. As a consequence, there is a function $\gamma :  ]0,1[ \llll \R^+$  such that the spaces $\ell_{1}^n$ and $\ell_1$   have  the approximate hyperplane sum property for $\ell_\infty^4$ with such function  (see Definition \ref{def-AHSPL4}).
\end{proposition}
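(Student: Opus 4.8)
The plan is to establish condition 2) of Proposition \ref{pro-char} for $\ell_1$ and for every $\ell_1^n$ with a single uniform function $\rho$, and then quote that proposition to obtain the AHSp-$\ell_\infty^4$. For the $1$-norming set $B \subset S_{(\ell_1)^*}$ I would take the family of \emph{sign functionals} $\theta=(\theta_k)_k$ with $\theta_k\in\{-1,1\}$, regarded as elements of $\ell_\infty=(\ell_1)^*$; these form a $1$-norming subset of $S_{(\ell_1)^*}$ (for $\ell_1^n$ one uses the finitely many sign vectors of $\mathbb{R}^n$). The first reduction is that for each such $\theta$ the diagonal map $\Psi_\theta(x)=(\theta_k x(k))_k$ is a linear involutive isometry of $\ell_1$ that preserves $M_{\ell_1}^4$ (being a linear isometry, it respects the defining norm constraints) and satisfies $\theta(x)=u^*(\Psi_\theta x)$. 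Transporting a configuration by $\Psi_\theta$ therefore reduces the condition of Definition \ref{def-AHSPL4} for an arbitrary $\theta\in B$ to the single functional $u^*$. Since $\Psi_\theta$, the positive-part operation and normalization are all support-preserving, any bound obtained for $\ell_1$ transfers verbatim to every $\ell_1^n$ with the same $\rho$.

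It then suffices to assume $u^*(y_i)>1-\rho(\varepsilon)$ for every $i\in A$. I would first record the elementary estimate that if $u^*(y_i)>1-\rho$ and $\|y_i\|_1\le 1$ then the negative part of $y_i$ has norm $<\rho/2$, so that $z_i:=y_i^+/\|y_i^+\|_1$ is non-negative, of norm one, and within $\tfrac32\rho$ of $y_i$. Because $u^*(z_i)=\|z_i\|_1=1$ for these non-negative blocks, the $A$-indices automatically yield $\|\sum_{i\in A}z_i\|_1=u^*\bigl(\sum_{i\in A}z_i\bigr)=|A|$, which is exactly condition iii). Thus the norm-attainment is \emph{free} once the $A$-indices are made non-negative of norm one, and the entire difficulty is transferred to re-establishing $(z_i)_{i\le 4}\in M_{\ell_1}^4$, i.e.\ the four inequalities $\|z_{i_1}-z_{i_2}+z_{i_3}\|_1\le 1$.

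The central mechanism for this is Lemma \ref{lema-auxiliar} together with the symmetric statement in the remark following it. For a triple $y_{i_1}-y_{i_2}+y_{i_3}$ arising from an element of $M_{\ell_1}^4$ one has $\|y_{i_1}-y_{i_2}+y_{i_3}\|_1\le 1$, and when the two outer (positively signed) indices lie in $A$ one computes $u^*(y_{i_1}-y_{i_2}+y_{i_3})>1-2\rho$; the lemma then lets me raise one of the two outer slots by at most $2\rho$ in norm so as to make the combination \emph{exactly} non-negative. Once a combination is non-negative its $\ell_1$-norm collapses to $u^*(z_{i_1})-u^*(z_{i_2})+u^*(z_{i_3})$, so the constraint $\|z_{i_1}-z_{i_2}+z_{i_3}\|_1\le 1$ becomes the purely numerical inequality $\|z_{i_1}\|_1+\|z_{i_3}\|_1\le 1+\|z_{i_2}\|_1$, which holds with equality as soon as the three norms equal one. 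I would organize this as a case analysis on $A$, disposing of $|A|=1$ by Proposition \ref{pro-AHSP-1-element} and using the index-reversal symmetry $(y_1,y_2,y_3,y_4)\mapsto(y_4,y_3,y_2,y_1)$, which preserves both $M_{\ell_1}^4$ and the hypotheses on $u^*$, to cut down the possible positions of $A$.

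The step I expect to be the main obstacle is coordinating these modifications simultaneously across the four overlapping triples while keeping every perturbation below $\varepsilon$ and, crucially, without assuming that $y_i$ is anywhere near non-negative for $i\notin A$. Each index occurs with a $+$ sign in some triples and with a $-$ sign in others (indices $1$ and $4$ always with $+$, indices $2$ and $3$ with $-$ in two triples and $+$ in one), so the non-negativization of Lemma \ref{lema-auxiliar} applies cleanly only to triples whose outer indices both lie in $A$, and raising an element to repair one constraint can damage another; moreover pinning the $A$-blocks at non-negative norm-one vectors removes the freedom to simply rescale away a small overshoot. The resolution is to perform the non-negativization at the level of the combinations rather than the individual vectors—precisely what Lemma \ref{lema-auxiliar} and its symmetric version deliver—and to absorb the remaining adjustments into the coordinates of the non-$A$ indices, treating each configuration of $A$ separately. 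Once condition 2) is proved for $\ell_1$, and hence for every $\ell_1^n$, with a uniform $\rho$, Proposition \ref{pro-char} furnishes the AHSp-$\ell_\infty^4$ with $\gamma(\varepsilon)=\bigl(\rho(\varepsilon/4)\bigr)^2$, which completes the argument.
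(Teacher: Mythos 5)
Your opening reductions match the paper's: the sign-functional isometries reducing everything to $u^*$, the singleton case via Proposition~\ref{pro-AHSP-1-element}, positive parts, and Lemma~\ref{lema-auxiliar} as the non-negativization device, as well as the observation that norm-attainment is free once the $A$-blocks lie on the face $F=\{z\ge 0,\ u^*(z)=1\}$. But the proposal stops exactly where the real work begins, and the plan you sketch for that work fails in the hardest configurations. You treat the indices outside $A$ as free parameters into which ``the remaining adjustments'' can be absorbed. This is impossible. Take $A=\{1,4\}$: once $z_1,z_4$ are pinned on the face, any admissible $z_2$ must satisfy $\|z_1-z_2+z_4\|\ge u^*(z_1-z_2+z_4)=2-u^*(z_2)$, so the constraint $\|z_1-z_2+z_4\|\le 1$ forces $u^*(z_2)=1=\|z_2\|$, i.e.\ $z_2$ (and likewise $z_3$) must \emph{also} lie on the face, with $z_1-z_2+z_4$ exactly non-negative. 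The non-$A$ indices are therefore not adjustable at all; they can be placed on the face only because Lemma~\ref{le-3-points} guarantees $u^*(y_2),u^*(y_3)>1-2\rho$ whenever $1,4\in A$. This is the step you are missing: the paper first enlarges $A$ to its consecutive hull $C'=\{i:\min C\le i\le\max C\}$ (via Lemma~\ref{le-3-points}) and then forces \emph{every} index of $C'$ onto the face, so that the only cases to treat are $C'=\{1,2\},\{1,2,3\},\{1,2,3,4\}$. Your framing in terms of $A$ alone cannot succeed; note also that when $A=\{1,2,3,4\}$ there are no non-$A$ indices whatsoever, so your stated resolution is vacuous precisely in the case requiring the most work.

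The second gap is the order of operations. You pin $z_i:=y_i^+/\|y_i^+\|_1$ for $i\in A$ \emph{first} and then propose to repair the $M_{\ell_1}^4$ constraints; but normalized positive parts do not in general satisfy $z_{i_1}-z_{i_2}+z_{i_3}\ge 0$, and Lemma~\ref{lema-auxiliar} repairs this by \emph{raising} the very vectors you have just pinned, destroying $\|z_i\|=1$. You acknowledge that pinning ``removes the freedom to simply rescale away a small overshoot,'' yet you never say how to proceed without that freedom. The paper's answer is to reverse the order: start from the un-normalized positive parts $b_i$, apply Lemma~\ref{lema-auxiliar} (twice in the case $C'=\{1,2,3,4\}$, raising only indices $1$ and $4$, which occur with a $+$ sign in every triple, so each raising preserves all previously achieved non-negativity), and only at the end push onto the face by adding multiples of $e_1$ and dividing all four vectors by a common factor $1+c\rho$. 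That orchestration — which indices may be raised, in which sequence, and the final uniform normalization with explicit constants such as $\rho(\varepsilon)=\varepsilon/226$ — is the substance of the paper's three-case analysis, and it is absent from your proposal.
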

\begin{proof}
 We prove the statement for $\ell_1$.  To this purpose we denote by $\{ e_n\}$  the usual Schauder basis of $\ell_1$. 
It suffices to  show  that $\ell_1$ satisfies  condition 2)  in  Proposition \ref{pro-char} for $\rho(\varepsilon)= \frac{\varepsilon
}{226}$ and  $E=
\ext (B_{(\ell_{1})^*  })$, which is clearly  a $1$-norming set for $\ell_1$.  In such case we deduce that  $\ell_1$ satisfies the AHSp-$\ell_\infty ^4$  with the function $\gamma $ given by  $\gamma (\varepsilon)= \rho ^2  (\frac{\varepsilon}{4})$ by  Proposition \ref{pro-char}.

 From now on $0 < \varepsilon < 1$ will be fixed and we simply write  $\rho$ instead of $\rho(\varepsilon)$.
Assume that 
\linebreak[4]
 $y^* \in E= \ext (B_{(\ell_{1})^*  })$,    $(a_i)_{i \le 4} \in M_{\ell_1}^4$   and  $\varnothing \ne C \subset \{ 1,2,3,4 \}$  is a  nonempty set such that
$$
y^* (a_i) > 1 -\rho  , \sep \forall  i\in C.
$$
Firstly by Lemma \ref{le-t-sets} we can clearly assume that $\min C =1$. 

In the case that $C$ contains only one element,   by Proposition \ref{pro-AHSP-1-element}  condition 2) in Proposition \ref{pro-char} is satisfied for such set $C$.
Otherwise $C$ contains at least two elements.  

Since $\ext (B_{(\ell_{1})^*  }) = \{ z^* \in \ell_{1}^* : \vert  z^* (e_n) \vert =1, \forall n \in \N\}$, by using an appropriate surjective linear isometry onto $\ell_1$, we can clearly assume that $y^*= u^*$. It suffices to show that  there exists   $(z_i)_{ i \le  4} \in M_{ \ell_1}^4$ satisfying that
$$
\Vert  z_i - a_i \Vert < \varepsilon, \forall i \le 4 \sem \text{   and also} \sem
u^* (z_i) =1  \sem \text{and} \sem z_i \ge 0,  \sep \forall  i\in C.
$$

We define the set $C^\prime $ given by
$$
C':= \{ i\in \N: 1\leq i\leq \max C, \ u^* (a_i) > 1 - 2\rho \}.
$$
In case that  $i,k \in C$ and  $j \in \N$   satisfies  $ i  <  j < k$, by Lemma \ref{le-3-points} we know that $j \in C^\prime$. Since $C \subset  C^\prime$,  the previous remark shows that
$C^\prime $  has consecutive elements.
%is the union of $A$ and the integers in the interval  $[i_0,  %\max A]$,
In fact
$$
C^\prime = \{ i \in \N:  1 =\min C \le i \le \max C\}.
$$

We define
$$
P_i=\{ k \in \N : a_i(k) \ge 0\},  \seg   N_i=\N \backslash P_i,  \seg b_i = a_i \chi_{P_i},  \seg (i \in  C')
$$
Since $u^* \ge 0$, by assumption we have that
\begin{equation}
\label{ai-big}
1-2\rho < u^*(a_i) \le  u^*(b_i) \le \Vert b_i \Vert \le \Vert a_i \Vert \le 1 \sep \text{and} \sep b_i \ge 0,  \sem  \forall i\in C'.
\end{equation}
Hence
\begin{equation}
\label{ai-ui}
\Vert b_i-a_i \Vert = \Vert a_i  \chi_{N_i} \Vert  \le 1 - \Vert b_i \Vert < 2\rho, \quad \forall i\in C'.
\end{equation}

\noindent
Now we will distinguish several cases,  depending on the number of elements of $C^\prime$. Since we   assume that $C$ contains more than one element and $C \subset  C^\prime $, then $C^\prime $ also contains  at least two elements.  So  $C'=\{1,2\}$, $C'=\{1,2,3\}$ or $C'=\{1,2,3,4\}$.

\noindent  $\bullet$ \underline{Case 1:} Assume  that   $C'=\{ 1,2\}$.
Define the elements in $\ell_1$  given by
$$
x_i =
\begin{cases}
b_i + (1-\|b_i\| )e_1           & \mbox{if} \sep  i \in \{1,2 \}   \\
a_i  & \mbox{if} \sep  i \in \{ 3,4 \} .
\end{cases}
$$
So we have that
$$
x_i\ge 0, \sem \text{and} \sem \Vert x_i \Vert =1, \sep  i=1,2.
$$
From \eqref{ai-big} and   \eqref{ai-ui} it follows that
\begin{align}
\label{bi2}
%\label{xi-ai}
 \sem \Vert x_{i}  \Vert \le  1 \sem \text{and }\sem   \Vert x_{i}-a_{i}
\Vert < 4\rho, \seg \forall  i \le 4.
\end{align}
In this case we will change twice the previous vectors   in order to get the desired properties. Firstly  we define the set $\{ y_i: 1 \le i \le 4 \} $ by
$$
y_i =
\begin{cases}
\frac{x_i}{1+ 8\rho} + (1-\frac{1}{1+8\rho})e_1         & \mbox{if} \sep i \in \{ 1,2 \}   \\
\frac{x_i}{1+8\rho} & \mbox{if} \sep  i \in \{3,4 \} .
\end{cases}
$$
It is clearly satisfied that
\begin{equation}
\label{ci-second-case}
y_{i} \geq 0 , \ \Vert y_{i} \Vert =1, \ \text{for} \ i=1,2 \sem \text{and }\sem
y_i  \in B_{\ell_1} \sep \text{for } \sep 1 \le i \le 4.
\end{equation}
For each $1 \le i \le 4$ we  also have that
\begin{align}
\label{ci-ui2}
\nonumber
\| y_i - a_i \Vert & \leq \Big\| y_i - \frac{x_i}{1+8\rho} \Big\|  + \Big\|  \frac{x_i}{1+8\rho} - x_i \Big\|    + \| x_i-a_i \| \\
&< \left(1 - \frac{1}{1+8\rho}\right ) + \left(1 - \frac{1}{1+8\rho}\right ) + 4\rho \sem \text{(by \eqref{bi2})} \\
\nonumber
& = \frac{16\rho}{1+8\rho} + 4\rho\\
\nonumber
& < 20 \rho.
\end{align}
Now we  fix  $1\leq i_1 < i_2 < i_3 \leq 4$  and   estimate the norm of $\Vert  y_{i_1} - y_{i_2} +
y_{i_3} \Vert$ as follows.  In case that  $\{1,2\} \subset \{  i_1, i_2, i_3 \}$ we have
\begin{align}
\label{ci1-ci2+ci32}
\nonumber
\| y_{i_1} - y_{i_2} + y_{i_3} \Vert & = \Big\| \frac{x_{i_1}-x_{i_2}+x_{i_3}}{1+8\rho} \Big\|  \\
%   \nonumber
&\leq  \frac{\|a_{i_1}-a_{i_2}+a_{i_3}\| + \|x_1-a_1\| +  \|x_2-a_2 \|}{1+8\rho}\\
\nonumber
& <\frac{\|a_{i_1}-a_{i_2}+a_{i_3}\| + 8\rho}{1+8\rho} \seg \text{(by  \eqref{bi2})} \\
\nonumber
& \leq 1.
\end{align}
Otherwise  $\{1,2\}\not\subset  \{  i_1, i_2, i_3 \}$ and we obtain that
\begin{align}
\label{ci1-ci2+ci33}
\nonumber
\| y_{i_1} - y_{i_2} + y_{i_3} \Vert & \leq \Big\| \frac{x_{i_1}-x_{i_2}+x_{i_3}}{1+8\rho} \Big\|  + 1 -\frac{1}{1+8\rho}\\
&< \frac{\|a_{i_1}-a_{i_2}+a_{i_3}\| + 4\rho}{1+8\rho}+ \frac{8\rho}{1+8\rho} \sem \text{(by \eqref{bi2})} \\
\nonumber
& < 1 + 4 \rho.
\end{align}
Now we define
$$
z_i =
\begin{cases}
\frac{y_i}{1+ 4\rho} + (1-\frac{1}{1+4\rho})e_1         & \mbox{if} \sep  i \in \{ 1,2,3 \}   \\
\frac{y_4}{1+4\rho}   & \mbox{if} \sep   i =4.
\end{cases}
$$
In view of \eqref{ci-second-case} we have that
\begin{equation}
\label{yi-second-case}
z_{i}\geq 0, \sep u^*(z_i)=1  \sep \text{for } \sep  i=1,2  \sem \text {and}  \sem z_i \in B_{ \ell_1} \sep  \text{for} \sep 1 \le i \le 4.
\end{equation}
For each  $1\leq i \leq 4$  we obtain  that
\begin{align}
\label{yi-ui-second-case}
\nonumber
\| z_i - a_i \Vert & \leq \Big\| z_i - \frac{y_i}{1+4\rho} \Big\|  + \Big\|  \frac{y_i}{1+4\rho} - y_i \Big\|    + \| y_i-a_i \| \\
&<\left(1 - \frac{1}{1+4\rho}\right ) + \left(1 - \frac{1}{1+4\rho}\right ) + 20\rho \sem  \text{(by \eqref{ci-second-case}  and  \eqref{ci-ui2})} \\
\nonumber
& = \frac{8\rho}{1+4\rho} + 20\rho < 28 \rho < \varepsilon.
\end{align}
Now we check that  $(z_i)_{i \le 4} \in M_{\ell_{1}}^4$.
For each $1\le i_1 < i_2 < i_3 \leq 4$, we consider the following two cases. If  $\{ 1,2\} \subset \{  i_1, i_2, i_3 \}$ it is clear that
\begin{align}
\nonumber
\| z_{i_1} - z_{i_2} + z_{i_3} \Vert & \leq  \frac{\| y_{i_1}-y_{i_2}+y_{i_3}\| }{1+4\rho} +  1 - \frac{1}{1+4\rho} \\
\nonumber
& < \frac{1}{1+4\rho}+ 1 - \frac{1}{1+4\rho} \seg \text{(by \eqref{ci1-ci2+ci32})} \\
\nonumber
& = 1.
\end{align}
Otherwise    $\{ 1,2\} \not\subset \{  i_1, i_2, i_3 \}$ and we obtain  that
\begin{align}
\nonumber
\| z_{i_1} - z_{i_2} + z_{i_3} \Vert & =  \frac{\| y_{i_1}-y_{i_2}+y_{i_3}\| }{1+4\rho}  \\
\nonumber
& < 1 \seg \text{(by   \eqref{ci1-ci2+ci33})}.
\end{align}
In view of \eqref{yi-second-case} and  \eqref{yi-ui-second-case}, since  we checked that
$(z_i)_{  i \le 4} \in M_{\ell_{1}}^4$,  the proof is finished  in  case 1.

\noindent  $\bullet$ \underline{Case 2:} Assume  that  $C'=\{ 1,2,3\}$. We know that
\begin{align}
\nonumber
1-4\rho & = 2(1-2\rho) -1< u^{*}(b_1 + b_3) - u^{*}(b_2) \sem \text{(by  \eqref{ai-big})}  \\
\nonumber
&= u^{*}(b_1- b_2 + b_3) \le \Vert b_1-b_2 + b_3 \Vert \\
\nonumber
& \le \Vert a_1-a_2 + a_3\Vert +  \sum _{ i=1}^ 3 \Vert b_{i}-a_{i} \Vert\\
\nonumber
& < 1 + 6\rho  \seg \text{(by  \eqref{ai-ui})}.
\end{align}
We obtained that
\begin{equation}
\label{ai0j0k0-case3}
1-4\rho <  u^{*}(b_1-b_2 + b_3) \leq  \Vert b_1-b_2 + b_3\Vert < 1 + 6\rho.
\end{equation}
We can apply  Lemma \ref{lema-auxiliar} with $b_1$ playing the role of $z$,  so there is  $x_1\in \ell_{1}$ such that
\begin{align}
\label{w1}
x_1\ge b_1 \ge 0, \sem  \Vert x_1-b_1 \Vert \le 10\rho \sep \text{and} \sep x_1-b_2+b_3 \ge 0.
\end{align}
We define
$$
x_i=b_i\sep  \text{for}\sep  i\in \{ 2,3\} \sem \text{and}  \sem x_4=a_4.
$$
Notice that in view of  \eqref{ai-ui} and  \eqref{w1} we have
\begin{equation}
\label{wi-ui3}
\Vert x_i - a_i \Vert  <  12\rho, \sem  \forall i \le 4.
\end{equation}
As a consequence, for each   $i\in \{1,2,3\}$, from  \eqref{ai-big} and  \eqref{w1}    it follows that
\begin{align}
\label{3wi}
\nonumber
1-2\rho& < \Vert b_i\Vert  \\ &\le\Vert x_i \Vert \\ \nonumber& \le \Vert x_i - a_i \Vert + \Vert a_i \Vert < 1+ 12\rho.
\end{align}

If  $ 1\le i_1 < i_2 < i_3 \le 4$  we obtain that
\begin{align}
\label{wi1-wi2-3-case}
\Vert x_{i_1}-x_{i_2}+x_{i_3}\Vert & \le \Vert  a_{i_1}-a_{i_2}+a_{i_3}\Vert + \sum _{ j=1}^3 \Vert x_{i_j} - a_{i_j}\Vert \\
\nonumber &  <1+ 36\rho \seg  \text{(by  \eqref{wi-ui3})}.
\end{align}
Now we define a new element $(y_i)_{ i \le  4}$  in order to have that  $\{y_i: i\le 3 \} \subset \{x \in B_{\ell_{1}}: u^*(x)=1\}$.  We put 
$$
y_i =
\begin{cases}
\frac{x_i}{1+36\rho}   + ( 1-   \frac{\Vert x_i\Vert}{1+36\rho})e_1       & \mbox{if } \sep i\in \{1,2,3\}  \\
\frac{x_4}{1+36\rho}  & \mbox{if} \sep \, \, i=4.
\end{cases}
$$
For each $i \le 3$, since $b_i \ge 0$, from 
\eqref{w1}  we also have that $x_i \ge 0$. In view of \eqref{3wi} we deduce that 
%Since $b_i \ge 0$ for each $i \le 3$, from %\eqref{w1}  and \eqref{3wi} it is clear that
\begin{align}
\label{zi}
y_i \ge 0,  \sem u^{*}(y_i)=1 \sep \text{for }\sep i\in \{1,2,3\} \sep \text{and} \sep
\Vert y_4 \Vert \le 1.
\end{align}
For each $1 \le i \le 4$, in view of    \eqref{3wi} and \eqref{wi-ui3}    we obtain that
\begin{align}
\label{zi-ui}
\nonumber
\Vert y_i-a_i \Vert & \le \Big\Vert y_i -\frac{x_i}{1+36\rho}\Big\Vert + \Big\Vert \frac{x_i}{1+36\rho} - x_i \Big\Vert + \Vert x_i-a_i\Vert \\
& < \Bigl(1 -\frac{1-2\rho }{1+36\rho}\Bigr) + \Vert x_i\Vert\Bigl(1 -\frac{1}{1+36\rho} \Bigr) + 12\rho \\
\nonumber
& < \frac{38 \rho  }{1+36\rho} + (1+12\rho)\frac{36\rho}{1+36\rho} + 12\rho \\
\nonumber
& < 86\rho.
\end{align}
On one hand,  since $x_i \ge 0$ for each  $i \le 3$  we also have that
\begin{align}
\label{z1-z2}
\nonumber
\Vert y_1-y_2+y_3\Vert & = \Bigl\Vert  \frac{x_1-x_2+x_3}{1+36\rho} + \Bigl( 1 - \frac{\Vert x_1\Vert -
	\Vert x_2\Vert +\Vert x_3 \Vert  }{1+36\rho}\Bigr)e_1\Bigl\Vert \\
&= \Bigl\Vert  \frac{x_1-x_2+x_3}{1+36\rho} +
\Bigl( 1 - \frac{\Vert x_1 - x_2 + x_3\Vert  }{1+36\rho}\Bigr)e_1\Bigl\Vert  \sem \text{(by \eqref{w1})} \\
\nonumber
&= \Bigl\Vert  \frac{x_1-x_2+x_3}{1+36\rho}\Bigl\Vert +  \Bigl( 1 - \frac{\Vert x_1 - x_2 + x_3
	\Vert  }{1+36\rho}\Bigr) \sem \text{(by \eqref{wi1-wi2-3-case})} \\
\nonumber
&=1.
\end{align}
On  the other hand, if  $1\le i_1 < i_2 <  4$  then
\begin{align}
\label{zi1-zi2}
\nonumber \Vert y_{i_1}-y_{i_2}+y_4\Vert &\le \Bigl\Vert  \frac{x_{i_1}-x_{i_2}+x_4}{1+36\rho} \Bigl\Vert + \frac{1}{1+36\rho}\Bigl |\Vert x_{i_1} \Vert- \Vert x_{i_2} \Vert \Bigr  |\\
& < 1+    \frac{14\rho}{1+36\rho} \sem \text{(by \eqref{3wi}  and \eqref{wi1-wi2-3-case})} \\
\nonumber
& < 1 + 14\rho.
\end{align}

Now we  define the element in $M_{\ell_1}^4$ satisfying the required conditions in this case. To this purpose we take
$$
z_i =
\begin{cases}
\frac{y_i}{1+14\rho}   + ( 1-   \frac{1}{1+14\rho})e_1       & \mbox{if}\sep i\in \{1,2,3\}  \\
\frac{y_4}{1+14\rho}  & \mbox{if} \sep i=4.
\end{cases}
$$
Let us fix $1\le i\le 4$. In view of \eqref{zi}   it is satisfied that
\begin{equation}
\label{yi-case3}z_i \ge 0,  \sep  \sep u^{*}(z_i)=1 \sep \text{for} \sep i\in \{1,2,3\} \sep
\text{and} \sep \Vert z_4 \Vert \le 1.
\end{equation}
By using   \eqref{zi} and   \eqref{zi-ui} we obtain the following upper estimate
\begin{align}
\label{yi-ui-case3}
\nonumber
\Vert z_i - a_i  \Vert & \le  \Big \Vert z_i - \frac{y_i}{1+14\rho}\Big \Vert + \Big \Vert  \frac{y_i}{1+14\rho} - y_i  \Big \Vert + \Vert y_i -a_i \Vert \\
& < \Bigl( 1-   \frac{1}{1+14\rho}\Bigr) + \Vert y_i\Vert \Bigl( 1-   \frac{1}{1+14\rho}\Bigr) +86\rho \\
\nonumber
& < 28\rho +86\rho = 114\rho <\varepsilon.
\end{align}
We also have that
\begin{align}
\label{yi0j0k0-case3}
\nonumber
\Vert z_1-z_2+z_3 \Vert & = \Big \Vert \frac{y_1-y_2+y_3}{1+14\rho} +\Bigl( 1-   \frac{1}{1+14\rho}\Bigr)e_1 \Big \Vert \\
&\leq \frac{\Vert y_1-y_2+y_3 \Vert}{1+14\rho} + \Bigl( 1-   \frac{1}{1+14\rho}\Bigr)    \\
\nonumber
&= 1  \seg  \text{(by  \eqref{z1-z2})}.
\end{align}
In case that $1\le i_1< i_2< 4$  we obtain that
\begin{align}
\label{yi1-i2-i3-case3}
\Vert z_{i_1} - z_{i_2}+z_4\Vert & = \Big \Vert \frac{y_{i_1}-y_{i_2}+y_4}{1+14\rho} \Big \Vert  \\
\nonumber
&< 1  \seg  \text{(by  \eqref{zi1-zi2})}.
\end{align}

From equations \eqref{yi-case3}, \eqref{yi-ui-case3}, \eqref{yi0j0k0-case3} and \eqref{yi1-i2-i3-case3} the element  $( z_i)_{ i \le 4}\in M_{\ell_1}^4$ satisfies  all the required conditions. 

\noindent  $\bullet$ \underline{Case 3:}  Assume that  $C'=\{1,2,3,4\}$.
%If  $1\le i_1<i_2<i_3\le 4$ then
%\begin{align}
%\label{ai1-ai2}
%\nonumber
%1-4\rho & = 2(1-2\rho) -1< u^{*}(b_{i_1} + %b_{i_3}) - u^{*}(b_{i_2})  \seg  \text{(by  %\eqref{ai-big})}\\
%\nonumber
%&= u^{*}(b_{i_1} - b_{i_2} + b_{i_3}) \le %\Vert b_{i_1} - b_{i_2}+ b_{i_3}\Vert \\
%\nonumber
%& \le \Vert a_{i_1} - a_{i_2} + a_{i_3}\Vert %+  \sum _{j=1}^3 \Vert b_{i_j}-a_{i_j} \Vert %\\
%\nonumber
%& < 1 + 6\rho  \seg  \text{(by  %\eqref{ai-ui})}.
%\end{align}
For each $1 \le i_1 < i_2 < i_3 \le 4$, by the same argument  used to obtain \eqref{ai0j0k0-case3} in case 2, with $(i_1, i_2, i_3)$ playing the role of $(1,2,3)$ there we get that 
\begin{equation}
    \label{b123-case3}
1-4\rho < u^{*}(b_{ i_1} - b_ {i_2} + b_ {i_3}) \le \Vert b_{ i_1} - b_ {i_2} + b_ {i_3} \Vert < 1 + 6\rho .
\end{equation}
In view of  Lemma  \ref{lema-auxiliar},  there are $x_1, x_4 \in \ell_1$ such that
\begin{align}
\label{v1-a1}
x_i \ge b_i, \seg  &   \Vert x_i - b_i \Vert  \le 10\rho \sep \text{for} \sep i=1,4, \\
\nonumber
x_1 - b_2 + b_3 \ge0 \sem \text{and} &  \sem b_1 - b_3 + x_4\ge 0.
\end{align}
As a consequence
\begin{align*}
1-4\rho &< u^{*}(b_1 - b_2 + b_4) \le  u^{*}(x_1 - b_2 + b_4)  \seg  \text{(by  \eqref{b123-case3} and \eqref{v1-a1})}  \\
& \le \Vert x_1 - b_2 + b_4 \Vert \\
&\le \Vert b_1 - b_2 + b_4 \Vert  + \Vert x_1 - b_1 \Vert \\
& < \Vert a_1 - a_2 + a_4 \Vert  + 6 \rho  + \Vert x_1 - b_1 \Vert    \seg  \text{(by  \eqref{ai-ui})}      \\
& \le  1 + 16\rho    \sem  \text{(by   \eqref{v1-a1})}
\end{align*}
and
\begin{align*}
1-4\rho &< u^{*}(b_2 - b_3 + b_4) \le  u^{*}(b_2 - b_3 + x_4)  \sem  \seg  \text{(by  \eqref{b123-case3} and \eqref{v1-a1})} \\
& \le \Vert b_2 - b_3 + x_4 \Vert \\
&\le \Vert b_2 - b_3  + b_4 \Vert  + \Vert x_4 - b_4 \Vert  < 1 + 16 \rho
  \seg  \text{(by  \eqref{ai-ui}   and  \eqref{v1-a1})}.
\end{align*}
From the last two chains of inequalities we have that
\begin{align}
\nonumber  1-4\rho < u^{*}(x_1 - b_2 + b_4) \le \Vert x_1 - b_2 + b_4 \Vert
< 1 + 16\rho   \\
\nonumber  1-4\rho < u^{*}(b_2 - b_3 + x_4) \le \Vert b_2 - b_3 + x_4 \Vert < 1 + 16\rho.
\end{align}
By applying  again  Lemma \ref{lema-auxiliar},  there are   $y_1, y_4 \in \ell_1$  satisfying that
\begin{align}
\label{wi-vi}
y_i \ge x_i, \seg  &   \Vert y_i - x_i \Vert  \le 20\rho \sep \text{for} \sep i=1,4, \\
\nonumber
y_1 - b_2 + b_4 \ge0 \sem \text{and} &  \sem b_2 - b_3 + y_4\ge 0.
\end{align}
Now we define $y_2=b_2$ and  $y_3=b_3$.

By using   \eqref{wi-vi}, \eqref{v1-a1}  and \eqref{ai-ui}, for  each  $ i\le 4$ it is clear that  
\begin{align}
\label{wi-ui}
\Vert y_i - a_i \Vert &\le \Vert y_i -  b_i  \Vert + \Vert b_i - a_i\Vert \\
\nonumber
& < 20\rho + 10\rho + 2\rho =32\rho.  
\end{align}
As a consequence, from    \eqref{v1-a1} and  \eqref{wi-vi} for each $i \le 4$  we also have that 
\begin{align}
\label{wi}
1-2\rho &< u^{*}(a_i)\le u^{*}(b_i) \le u^{*}(y_i) \le \Vert y_i\Vert \le \Vert y_i - a_i \Vert  + \Vert  a_i \Vert < 1 + 32\rho.
\end{align}

In view of \eqref{v1-a1} and \eqref{wi-vi} it is immediate to check that
\begin{equation}
\label{wi-wj}
y_{i_1} -y_{i_3} +y_{i_3} \ge 0 \sem \text{for any } \sep 1 \le i_1 < i_2 < i_3 \le 4.
\end{equation}

For every $ 1\le i_1<i_2<i_3\le 4$  we also get that
\begin{align}
\label{wi-wj-wk-case-3}
\Vert y_{i_1} - y_{i_2} + y_{i_3} \Vert &\le \Vert a_{i_1} - a_{i_2} + a_{i_3} \Vert + \sum _{j=1}^3 \Vert y_{i_j} - a_{i_j} \Vert \\
\nonumber
& < 1 + 96 \rho  \seg  \text{(by  \eqref{wi-ui})}.
\end{align}

Now we define the elements satisfying the required conditions  by
$$
z_i = \frac{y_i}{1+96\rho} + \Bigl(  1-\frac{\Vert y_i\Vert}{1+96\rho}  \Bigr)e_1   \seg   (1 \le i \le 4).
$$
By \eqref{wi-vi} and \eqref{v1-a1} it is clear that $y_i \ge 0$ for each $ i \le 4$.   From   \eqref{wi} we know that $\Vert y_i \Vert \le 1 + 32 \rho$ and so 
we deduce that 
\begin{equation}
\label{yi-case4}
z_i \ge 0   \seg  \text{and}  \seg  u^{*}(z_i)=1, \seg \forall 1\le i \le 4.
\end{equation}
We also obtain that
\begin{align}
\label{yi-ui-case4}
\nonumber
\Vert z_i - a_i \Vert & \le  \Big \Vert z_i - \frac{y_i}{1+96\rho} \Big \Vert +
\Big \Vert \frac{y_i}{1+96\rho} - y_i  \Big \Vert + \Vert y_i -a_i \Vert \\
& < \Bigl(  1-\frac{\Vert y_i\Vert}{1+96\rho}  \Bigr) +
\Vert y_i\Vert \Bigl(  1-\frac{1}{1+96\rho}  \Bigr) + 32\rho
\seg  \text{(by  \eqref{wi-ui})} \\
\nonumber
& <  \Bigl(  1-\frac{1-2\rho}{1+96\rho}  \Bigr) + \frac{1+32\rho}{1+96\rho}96\rho
+32\rho \seg \seg  \text{(by  \eqref{wi})} \\
\nonumber
& <   \frac{98\rho}{1+96\rho} + 96\rho + 32\rho \\
\nonumber
& < 226\rho=\varepsilon.
\end{align}
Finally, notice that for every  $1\le i_1<i_2<i_3\le 4$, since $y_i \ge  0$ for every $i \le 4 $  we have that
\begin{align}
\nonumber
\Vert z_{i_1} - z_{i_2} + z_{i_3}\Vert & = \Big \Vert \frac{y_{i_1} - y_{i_2} + y_{i_3}}{1+96\rho} +
\Bigl(  1-\frac{\Vert y_{i_1}\Vert - \Vert y_{i_2}\Vert +\Vert y_{i_3}\Vert}{1+96\rho}  \Bigr) e_1\Big \Vert \\
\label{y1-y2-y3-case4}
&=\Big \Vert \frac{y_{i_1} - y_{i_2} + y_{i_3}}{1+96\rho} +
\Bigl(  1-\frac{\Vert y_{i_1} -  y_{i_2}+ y_{i_3}\Vert}{1+96\rho}  \Bigr) e_1\Big \Vert
\seg  \text{(by   \eqref{wi-wj})} \\
\nonumber
&\le  \frac{\Vert y_{i_1} - y_{i_2} + y_{i_3}\Vert }{1+96\rho} +
1-\frac{\Vert y_{i_1} -  y_{i_2}+ y_{i_3}\Vert}{1+96\rho}
\seg  \seg  \text{(by  \eqref{wi-wj-wk-case-3})} \\
\nonumber
&=1.
\end{align}
In view of equations \eqref{yi-case4}, \eqref{yi-ui-case4} and \eqref{y1-y2-y3-case4}
the proof is also finished  in case 3. So we proved the statement for $\ell_1$.

The proof for $\ell_{1} ^n$  follows from the same  argument  that we used for $\ell_1$ by considering
elements in  $M_{\ell_{1}^n}^4$ (instead of elements in    $M_{\ell_{1}}^4$)  and  the description $\ext (B_{(\ell_{1}^n)^*  }) = \{ z^* \in (\ell_{1}^n)^* : \vert  z^* (e_k) \vert =1, \forall k \le n\}$.
\end{proof}

\vskip10mm

%{\bf Because of Theorem 2.1 in  Aron et all (TAMS, label ACKLM),  and our %Characterization (Theorem 3.3) the statement for  $\ell_1 ^n$ follows from the %statement for $\ell_1$. However, I do not think that it deserves to be noticed. What %do you think?
	
	%In case that you do think that this needs a reference, we can change a little the %last paragraph in the proof.

%I BELIEVE that there is no need to say that

%}

%\vskip10mm

Next result  follows from  the same argument of \cite[Theorem 2.7]{ABGKM2}. It will be useful, for instance, to extend the previous result 
to $L_1  (\mu)$ for any positive measure $\mu$.

\begin{theorem}
Assume that  $Y$ is a Banach space and $\gamma : ]0, 1[ \llll \R^+$ is a function such that
\linebreak[4]
  $Y=\overline{\cup \{Y_\alpha : \alpha \in \Lambda  \}}$, where $\{Y_\alpha : \alpha \in \Lambda  \}$  is a nested family of subspaces of $Y$ satisfying the AHSp-$\ell_\infty^4$ with the function $\gamma$. Then $Y$ has  the AHSp-$\ell_\infty^4$ with the function  $\zeta (\varepsilon) =  \gamma \bigl( \frac{\varepsilon}{2} \bigr)$.
\end{theorem}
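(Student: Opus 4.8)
The plan is to reduce the verification of the AHSp-$\ell_\infty^4$ for $Y$ to a single subspace $Y_\alpha$, where the property is available with the function $\gamma$. Given data $(y_i)_{i\le 4}\in M_Y^4$, a nonempty $A\subset\{1,2,3,4\}$ and $y^*\in S_{Y^*}$ with $y^*(y_i)>1-\gamma(\varepsilon/2)$ for each $i\in A$, I would first approximate each $y_i$ by an element of some $Y_\alpha$, correct the approximations so that they form an element of $M_{Y_\alpha}^4$, transfer the functional $y^*$ to $S_{Y_\alpha^*}$, apply the AHSp-$\ell_\infty^4$ inside $Y_\alpha$ with parameter $\varepsilon/2$, and finally observe that the resulting vectors already lie in $M_Y^4$ because $Y_\alpha$ is a subspace and the relevant norms are inherited.

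More precisely, fix $0<\varepsilon<1$ and set $\zeta(\varepsilon)=\gamma(\varepsilon/2)$. Since $A$ is finite, the strict hypotheses leave a margin $\sigma:=\min_{i\in A}\bigl(y^*(y_i)-(1-\gamma(\varepsilon/2))\bigr)>0$, and I would fix $\delta>0$ with $\delta<\min\{\sigma/4,\varepsilon/14\}$. By density of $\cup_\alpha Y_\alpha$ in $Y$ together with the fact that the family is nested (hence directed, so finitely many indices are dominated by one), there is a single index $\alpha$ and vectors $a_i\in Y_\alpha$ with $\|a_i-y_i\|<\delta$ for all $i\le 4$. Because $\|a_i\|\le 1+\delta$ and $\|a_{i_1}-a_{i_2}+a_{i_3}\|\le 1+3\delta$ for all $i_1<i_2<i_3$, the rescaled vectors $b_i:=a_i/(1+3\delta)$ satisfy $\|b_i\|\le 1$ and $\|b_{i_1}-b_{i_2}+b_{i_3}\|\le 1$, so $(b_i)_{i\le 4}\in M_{Y_\alpha}^4$; moreover $\|b_i-y_i\|\le 4\delta+3\delta^2<\varepsilon/2$ by the choice $\delta<\varepsilon/14$.

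For the functional, let $w^*$ be the normalisation of the restriction $y^*|_{Y_\alpha}$; this makes sense because $\|y^*|_{Y_\alpha}\|\ge y^*(b_j)>0$ for any $j\in A$, and it yields $w^*\in S_{Y_\alpha^*}$. Since $\|y^*|_{Y_\alpha}\|\le 1$ and $y^*(b_i)>0$, one has $w^*(b_i)\ge y^*(b_i)=y^*(a_i)/(1+3\delta)$. Here the choice $\delta<\sigma/4$ is exactly what is needed: from $y^*(a_i)\ge y^*(y_i)-\delta\ge 1-\gamma(\varepsilon/2)+\sigma-\delta$, a short computation (the inequality $\sigma>\delta(4-3\gamma(\varepsilon/2))$ holds because $\delta<\sigma/4$) gives $w^*(b_i)>1-\gamma(\varepsilon/2)$ for every $i\in A$. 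Now the AHSp-$\ell_\infty^4$ of $Y_\alpha$, applied with parameter $\varepsilon/2$ and the function $\gamma$, yields $(z_i)_{i\le 4}\in M_{Y_\alpha}^4$ with $\|z_i-b_i\|<\varepsilon/2$ for all $i\le 4$ and $\|\sum_{i\in A}z_i\|=|A|$. Since $Y_\alpha$ is a subspace, $(z_i)_{i\le 4}\in M_Y^4$ and the last equality holds in $Y$; combining with $\|b_i-y_i\|<\varepsilon/2$ gives $\|z_i-y_i\|<\varepsilon$, which is exactly the conclusion of Definition \ref{def-AHSPL4} for $Y$ with $\zeta(\varepsilon)=\gamma(\varepsilon/2)$.

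The delicate point, and the reason the factor $1/2$ appears in $\zeta(\varepsilon)=\gamma(\varepsilon/2)$, is the tension between two unavoidable losses: rescaling the $a_i$ into $M_{Y_\alpha}^4$ decreases the values $y^*(a_i)$, while one simultaneously needs the transferred functional to stay above the threshold $1-\gamma(\varepsilon/2)$ so that AHSp can be invoked in $Y_\alpha$. Both losses are of order $\delta$, so they can be absorbed by the fixed positive margin $\sigma$ coming from the strict hypothesis, provided $\delta$ is taken small enough; this is legitimate because $\delta$ (and hence $\alpha$) is allowed to depend on the given data, whereas $\zeta$ is not. A secondary technical care is that restricting $y^*$ to $Y_\alpha$ may lower its norm, forcing the renormalisation step and the estimate $w^*(b_i)\ge y^*(b_i)$; this renormalisation only raises the values, so it introduces no further loss.
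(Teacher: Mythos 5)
Your proof is correct and follows essentially the same route as the paper's: approximate the data inside a single $Y_\alpha$ (using density and nestedness), rescale by $1+3\delta$ to land in $M_{Y_\alpha}^4$, exploit the strict-inequality margin to keep the functional values above the threshold, restrict and renormalise $y^*$ to $S_{Y_\alpha^*}$, apply the AHSp-$\ell_\infty^4$ of $Y_\alpha$ with parameter $\varepsilon/2$, and finish by the triangle inequality. The only differences from the paper are cosmetic bookkeeping (the paper first pushes the approximants into $B_{Y_\alpha}$ and then divides by $1+3t$, and tracks the margin through the choice $t<\frac14\min\{\varepsilon/2,\min_{i\in A}(y^*(a_i)-1+\gamma(\varepsilon/2))\}$ rather than through your $\sigma$).
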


\begin{proof} 
Given $0<\varepsilon<1$,  let $\gamma(\varepsilon )$ be the positive real number satisfying Definition \ref{def-AHSPL4} for each space $Y_\alpha $.

Assume that  $(a_i)_{ i \le 4} \in M_{Y} ^4 $   and  that for some   nonempty set  $A \subset  \{ 1,2,3,4 \}$ and  $y^{*} \in S_{Y^{*}}$, it is satisfied that  $y^{*}(a_i)>1-\gamma \bigl(\frac{\varepsilon}{2} \bigr) $ for each  $i\in A$. Let us choose a  real  number $t$ such that
$$
0<t< \frac{1}{4}\min  \Bigl \{ \frac{ \varepsilon}{2}, \min \Bigl \{ y^*(a_i)-1+ \gamma \Bigl(\frac{\varepsilon}{2} \Bigr): i\in A \Bigr\} \Bigr\}.
$$ 
By assumption there exist $\alpha_0\in \Lambda$  and $\{b_i: i \leq 4\} \subset
B_{Y_{\alpha_0}}$ satisfying
$$
\Vert  b_i - a_i \Vert  < t\seg \ \ \text{for all } i\le 4.
$$
Now we define $y_i=\frac{b_i}{1+3t}$ for any $i\leq 4$.  By using that $(a_i) _{ i \le 4} \in M_Y ^4 $ it  is  immediate to check that  $(y_i)_{i \le 4} \in M_{Y_{\alpha _0}^4}$. We clearly have 
\begin{equation}
\label{y*yi}
\Vert  y_i-a_i  \Vert  \le \Bigl \Vert  \frac{b_i}{1+3t} - b_i \Bigr \Vert +  \Vert b_i-a_i \Vert 
< 3t+t= 4 t<\frac{\varepsilon}{2}, \sep  \text{for all} \sep i \le 4.
\end{equation}
For each $i \in A$ we obtain that  
\begin{equation}
\label{Ny}
y^*(y_i) >y^*(a_i)-4 t>1-\gamma \Bigl( \frac{\varepsilon}{2} \Bigr)>0.
\end{equation}

We define the element $z^* \in Y_{\alpha_0}^{*}$  by 
$$
z^*(z)=y^*(z) \seg \ \  (z \in Y_{\alpha_0}).
$$
Since $\Vert z^* \Vert \le  \Vert y^* \Vert $, we get that  $z^* \in B_{ Y_{\alpha _0}^* }$. In view of \eqref{Ny}  we know that $z^* \ne 0$ and we also have that
$$
\frac{z^*}{\|z^*\|}(y_i)=\frac{y^*}{\|z^*\|}(y_i)\geq y^*(y_i)>1-\gamma \Bigl( \frac{\varepsilon}{2} \Bigr)\seg \ \ \text{for all } i\in A.
$$
By assumption there  is  $(z_i)_{i\le 4} \in  M_{Y_{\alpha_0}}^4$  such that
    \begin{enumerate}
                \item[i)]  $\|z_i-y_i\|<\frac{\varepsilon}{2}  \sep \text{for each  }\sep  i \le 4,$
        \item [ii)]  $ \bigl \Vert \sum _{ i \in A} z_i \bigr\Vert  = \vert A \vert $.
    \end{enumerate}
In view of  \eqref{y*yi} we obtain that 
$$
\Vert z_i - a_i \Vert \le   \Vert  z_i -  y_i \Vert +
\Vert  y_i - a_i \Vert < \frac{\varepsilon}{2} + \frac{\varepsilon}{2} =
\varepsilon, \seg \forall i \le 4.
$$
We proved that $Y$ satisfies Definition \ref{def-AHSPL4} with the function $\zeta (\varepsilon) = \gamma \bigl( \frac{\varepsilon}{2} \bigr)$.
\end{proof}

%\newpage

Let us remark that the subspace of  $L_1 (\mu)$ generated by a finite number of characteristic functions is linearly isometric to some space $\ell_1^n$. Since the space of simple functions is dense in $L_1 (\mu)$, in view of Proposition \ref{prop-ell1}, we conclude that  $L_1 (\mu)$ satisfies the assumption of the previous result  and the function $\gamma $  does not depend on $\mu$.  In view of Theorem \ref{teo-char}  we obtain that the pairs $(\ell_\infty ^4, L_1 (\mu))$ have uniformly BPBp for operators for every measure $\mu$. More concretely we deduce the following assertions.

\begin{corollary}
There is a function  $\gamma: ]0,1[ \llll  \R^+$ such that  $L_1 (\mu)$ satisfies Definition \ref{def-AHSPL4} with such function, for any positive measure $\mu$. Hence the pair $(\ell_\infty^4, L_1 (\mu) )$ has the BPBp for operators. Moreover there is a function $\eta $ such that  the pair   $(\ell_\infty^4, L_1 (\mu) )$ satisfies Definition \ref{def-BPBp} for such function,   for any positive measure $\mu$.
\end{corollary}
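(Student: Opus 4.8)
The plan is to realize $L_1(\mu)$ as the closure of a directed family of finite-dimensional subspaces, each linearly isometric to some $\ell_1^n$, and then to chain together Proposition \ref{prop-ell1}, the preceding theorem, and Theorem \ref{teo-char}; the point throughout is to keep every constant independent of $\mu$. First I would fix the family of subspaces. For each finite collection $\{E_1,\dots,E_n\}$ of pairwise disjoint measurable sets with $0<\mu(E_j)<\infty$, let $Y_{\{E_j\}}$ be the linear span of $\{\chi_{E_1},\dots,\chi_{E_n}\}$. Since $\bigl\Vert \sum_{j}c_j\chi_{E_j}\bigr\Vert = \sum_{j}\lvert c_j\rvert\,\mu(E_j)$, the assignment $\sum_j c_j\chi_{E_j}\mapsto (c_j\mu(E_j))_{j\le n}$ is a surjective linear isometry of $Y_{\{E_j\}}$ onto $\ell_1^n$. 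Ordering the index sets by refinement of the underlying finite partitions makes this family directed by inclusion: the characteristic functions attached to the common refinement of two finite partitions span a subspace containing both. Because the integrable simple functions (those supported on a set of finite measure) are dense in $L_1(\mu)$ for any positive measure $\mu$, and each such simple function lies in one of these subspaces, the union $\bigcup Y_{\{E_j\}}$ is dense in $L_1(\mu)$.

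Next I would transfer the geometric property from $\ell_1^n$ to each member of the family. The condition in Definition \ref{def-AHSPL4} is formulated purely through norms and the duality pairing, so it is preserved by surjective linear isometries; hence every $Y_{\{E_j\}}$ has the AHSp-$\ell_\infty^4$ with the very same function $\gamma$ that Proposition \ref{prop-ell1} supplies for $\ell_1^n$, and this $\gamma$ is independent both of $n$ and of $\mu$. The directedness together with density is precisely what the proof of the preceding theorem requires: there one uses only that the four given vectors $a_i$ can be simultaneously approximated within $t$ by vectors lying in a \emph{single} member of the family. Applying that theorem then yields that $L_1(\mu)$ has the AHSp-$\ell_\infty^4$ with the function $\zeta(\varepsilon)=\gamma\bigl(\tfrac{\varepsilon}{2}\bigr)$, still independent of $\mu$; this is the first assertion of the corollary. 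The remaining assertions follow immediately from Theorem \ref{teo-char}: since $L_1(\mu)$ has the AHSp-$\ell_\infty^4$ with $\zeta$, the pair $(\ell_\infty^4,L_1(\mu))$ has the BPBp, and in fact satisfies Definition \ref{def-BPBp} with $\eta(\varepsilon)=\zeta^2\bigl(\tfrac{\varepsilon}{3}\bigr)$, a function depending only on $\varepsilon$, which is the claimed uniformity over all $\mu$.

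The density of integrable simple functions and the isometry formula are standard verifications that I would only state. The one point deserving care is the genuine arbitrariness of $\mu$: to keep the sets $E_j$ of finite measure I would recall that every $f\in L_1(\mu)$ vanishes off the $\sigma$-finite set $\{f\neq0\}=\bigcup_n\{\lvert f\rvert>\tfrac1n\}$, each $\{\lvert f\rvert>\tfrac1n\}$ having finite measure, so that the approximating simple functions may always be chosen supported on finite-measure sets. The only mild obstacle is therefore reconciling the word \emph{nested} in the preceding theorem with the directed-by-refinement family used here; I would resolve this by noting that the theorem's proof invokes nestedness solely to produce a single subspace approximating finitely many elements, a conclusion that the refinement-directed family delivers just as well.
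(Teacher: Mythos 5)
Your proposal is correct and follows essentially the same route as the paper: span finitely many characteristic functions to get subspaces isometric to $\ell_1^n$, invoke the uniform function from Proposition \ref{prop-ell1}, pass to $L_1(\mu)$ via the approximation theorem for families of subspaces, and finish with Theorem \ref{teo-char} to get a $\mu$-independent $\eta$. Your extra care on two points the paper glosses over --- choosing finite-measure supports for arbitrary positive $\mu$, and observing that the theorem's proof needs only simultaneous approximation of four vectors in a single subspace, so a refinement-directed (rather than nested) family suffices --- is a legitimate tightening of the same argument, not a different one.
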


\end{document}